\documentclass[11pt]{article}

\usepackage[T1]{fontenc}
\usepackage{lmodern}
\usepackage[letterpaper,margin=0.88in]{geometry}
\usepackage{microtype}

\usepackage{amsmath,amssymb,amsthm}
\usepackage{mleftright}
\mleftright

\usepackage{array,booktabs,multirow}
\usepackage{graphicx}

\usepackage{algorithm}
\usepackage{algpseudocode}

\usepackage[numbers,sort&compress]{natbib}
\usepackage{xurl}
\usepackage[hidelinks,pdfusetitle]{hyperref}
\usepackage[nameinlink,noabbrev]{cleveref}
\makeatletter
\providecommand{\theHALG@line}{}
\renewcommand{\theHALG@line}{\thealgorithm.\arabic{ALG@line}}
\newenvironment{breakablealgorithm}
  {\par\medskip\noindent\refstepcounter{algorithm}\hrule height.8pt depth0pt\kern2pt
   \renewcommand{\caption}[1]{{\raggedright\textbf{\fname@algorithm~\thealgorithm}~##1\par}\kern2pt\hrule\kern2pt}}
  {\kern2pt\hrule\par\medskip}
\makeatother

\graphicspath{{figures/}}

\theoremstyle{plain}
\newtheorem{theorem}{Theorem}[section]
\newtheorem{proposition}[theorem]{Proposition}
\newtheorem{lemma}[theorem]{Lemma}

\theoremstyle{definition}
\newtheorem{definition}[theorem]{Definition}
\newtheorem{example}[theorem]{Example}
\AddToHook{env/proposition/begin}{\crefalias{theorem}{proposition}}
\AddToHook{env/lemma/begin}{\crefalias{theorem}{lemma}}
\AddToHook{env/corollary/begin}{\crefalias{theorem}{corollary}}
\AddToHook{env/definition/begin}{\crefalias{theorem}{definition}}
\AddToHook{env/example/begin}{\crefalias{theorem}{example}}
\algrenewcommand\algorithmicrequire{\textbf{Input:}}
\algrenewcommand\algorithmicensure{\textbf{Output:}}
\algblockdefx[Switch]{Switch}{EndSwitch}[1]{\textbf{switch} #1 \textbf{do}}{\textbf{end switch}}
\algblockdefx[Case]{Case}{EndCase}[1]{\textbf{case} #1:}{\textbf{end case}}
\algtext*{EndCase}

\title{Polar Switch: Fast Muon Orthogonalization with Polynomial Circuits\thanks{Code is available at \url{https://github.com/zhehangdu/polar-switch}.}}
\author{Zhehang Du \qquad Weijie Su\thanks{University of Pennsylvania.
Emails: \texttt{\{duz,suw\}@wharton.upenn.edu}.}}
\hypersetup{pdfauthor={Zhehang Du and Weijie Su}}
\date{October 19, 2026}

\begin{document}
\maketitle

\begin{abstract}
Muon, a 2024-born optimizer for language models, differs from the 2015-born spectral gradient descent~\citep{carlson2015stochastic,carlson2015preconditioned} mainly in its fast, GPU-friendly Newton--Schulz polynomial for orthogonalizing the gradient matrix. Accordingly, a line of research proposes faster polynomial iterations for gradient orthogonalization, such as \texttt{Polar Express} and \texttt{Gram Newton--Schulz}, both built on quintic polynomials. In this paper, we introduce \texttt{Polar Switch}, a new approach to polynomial gradient orthogonalization. Using simple polynomial circuits, \texttt{Polar Switch} reaches a given high polynomial degree at a lower FLOP cost than quintic iterations. We define a circuit as a short sequence of symmetric matrix products on a few square buffers, and circuits with $k$ products form a restricted family of odd polynomials of degree up to $2^{k+1}+1$. We choose the sequence of products in each circuit, their coefficients, and how the circuits are composed by numerically minimizing a worst-case bound on the polar approximation error. This minimization is subject to a FLOP budget that accounts for symmetric matrix multiplication kernels and a rounding constraint for numerical stability in half precision. Our two recommended \texttt{Polar Switch} methods have 15.5--18.7\% lower runtime than \texttt{Polar Express} on square matrices and 8.6--16.2\% lower runtime than \texttt{Gram Newton--Schulz} on rectangular matrices with aspect ratios 3.25, 3.5, and 8, with comparable approximation accuracy. In pretraining Modded-NanoGPT \#91, Qwen3-0.6B, and Gemma 3-1B, \texttt{Polar Switch} reaches final validation losses similar to those of established orthogonalization methods. Across the more than 14 million matrices that each of the two recommended methods orthogonalized in these runs, the estimated output spectral norms never exceed the designed bounds by more than 3\%, demonstrating their numerical stability.

\end{abstract}

\section{Introduction}
\label{sec:intro}

For years, Adam~\citep{kingma2015adam} and AdamW~\citep{loshchilov2019decoupled}
have been the go-to optimizers for training large language models (LLMs)~\citep{brown2020language}. Both scale the momentum
coordinatewise, without explicitly accounting for the matrix structure of the parameters. Muon~\citep{jordan2024muon} takes a different approach in that it uses the matrix structure of the momentum and approximates its polar factor to form the update.
Studies have shown that Muon can reach the same training loss as AdamW with less
computation~\citep{liu2025muon,ai2025practical}, and it has since been used to train several frontier open-weight LLMs,
including Kimi K3~\citep{kimiteam2026kimi}, GLM-5~\citep{glm5team2026glm5}, Step 3.5 Flash~\citep{huang2026step35flash}, DeepSeek-V4~\citep{deepseekai2026deepseekv4}, and Qwen3.8-Flash-Next~\citep{qiu2026qwen38next}. This advantage comes with additional computation at each optimizer
step, because Muon must approximate the polar factor. The question is therefore how to compute
this approximation accurately and efficiently.

Let $W\in\mathbb{R}^{n\times m}$ be a matrix-shaped parameter, $\nabla W$ its gradient,
and $M$ its momentum matrix. Without loss of generality, let $n\le m$, because matrices with more rows than columns are handled by transposition.
With momentum coefficient $\mu$ and
learning rate $\eta$, a simplified Muon update is
\[
  M\leftarrow\mu M+\nabla W,
  \qquad
  W\leftarrow W-\eta\operatorname{polar}(M).
\]
If $M=U\Sigma V^\mathsf{T}$ is the compact singular value decomposition (SVD), then
$\operatorname{polar}(M)=UV^\mathsf{T}$~\citep{higham2008functions}.
However, computing an SVD at every optimizer step is expensive for LLM
training. Muon approximates the polar factor using
polynomial iterations inspired by Newton--Schulz~\citep{jordan2024muon}.
First, we normalize $M$ as
$X\leftarrow M/(\lVert M\rVert_F+\epsilon)$ with a small $\epsilon>0$ ($10^{-7}$ in our implementation), so that all singular values
of $X$ lie in $[0,1]$. At iteration $t$, we choose a quintic polynomial
$p_t(x)=c_tx+b_tx^3+a_tx^5$ with coefficients $a_t,b_t,c_t$ and compute
\begin{equation}
  A\leftarrow XX^\mathsf{T},
  \qquad
  Z\leftarrow b_tA+a_tA^2,
  \qquad
  X\leftarrow ZX+c_tX.
  \label{eq:one-step}
\end{equation}
Let $X=U\Sigma V^\mathsf{T}$ be the compact SVD of $X$.  The update is
$X\leftarrow U\left(c_t\Sigma+b_t\Sigma^3+a_t\Sigma^5\right)V^\mathsf{T}$.
Suppose that, at each iteration $t$, $p_t(\sigma)\ge0$ for every singular value
$\sigma$ of the current matrix $X$. The iteration replaces each singular value
$\sigma$ by $p_t(\sigma)$. After $T$ iterations, each initial singular value $\sigma$
becomes $(p_T\circ\cdots\circ p_1)(\sigma)$.
Approximating the polar factor in Muon therefore requires this composition to
approximate one on the positive initial singular values.

The classical Newton--Schulz iteration uses $p_t(x)=(3x-x^3)/2$, whose
repeated application converges to one for $0<x\le1$~\citep{higham2008functions}.
Muon uses Jordan's fixed quintic
$p_t(x)=3.4445x-4.775x^3+2.0315x^5$~\citep{jordan2024muon}.
\texttt{Polar Express} (\textsf{PE}) uses minimax approximation to choose a different quintic
polynomial at each iteration, minimizing its maximum deviation from one over the
current singular value interval~\citep{amsel2026polar}.
Jordan's method and \texttt{Polar Express} differ in their coefficients, but both use
quintic iterations. We therefore ask the following question:

\begin{center}
\begin{minipage}{0.88\linewidth}
\centering\itshape
Under a fixed FLOP budget for matrix orthogonalization, do repeated
quintic iterations yield the most accurate polar approximation?
\end{minipage}
\end{center}

We answer this question negatively with \texttt{Polar Switch}, which replaces repeated quintic iterations with a few high-degree polynomial circuits.
Each circuit is a short sequence of symmetric matrix products on a small number of square buffers.
For example, the first circuit of \textsf{Polar Switch 3-55} (\textsf{PS3-55}) uses three square buffers $B_1,B_2,B_3\in\mathbb{R}^{n\times n}$, with the coefficients $a_\tau,b_\tau,c_\tau$ given in \Cref{alg:polar-switch-methods}:
\begin{align}
 &B_1\leftarrow XX^\mathsf{T}, & &B_2\leftarrow a_1B_1^2+b_1B_1+c_1I, & &B_3\leftarrow a_2B_2^2+b_2B_2+c_2I, & &B_2\leftarrow a_3B_3^2+b_3B_3+c_3I,\nonumber\\
 & & &B_3\leftarrow a_4B_2^2+b_4B_3+c_4I, & &B_1\leftarrow a_5B_3^2+b_5B_1+c_5I, & &X\leftarrow B_1X.
 \label{eq:ps355}
\end{align}
Each squaring doubles the degree in $XX^\mathsf{T}$, to $2$, $4$, $8$, $16$, and $32$, so the circuit applies an odd polynomial of degree $65$ to the singular values.
\texttt{Polar Switch} searches numerically for the circuits and their coefficients that approximate the polar factor most accurately under a FLOP budget.
\Cref{tab:circuit-cost} compares \texttt{Polar Switch} with the quintic baselines.
\textsf{Polar Switch 2-55} (\textsf{PS2-55}) and \textsf{PS3-55} use fewer FLOPs than \texttt{Polar Express} and \texttt{Gram Newton--Schulz} and less memory than \texttt{Gram Newton--Schulz}.

\begin{table}[htbp]
\centering
\caption{Memory in matrix elements, FLOPs, and final singular value intervals for $X\in\mathbb{R}^{n\times m}$ with $n\le m$ and $\alpha=m/n$. In exact arithmetic, each method maps singular values in $[0.001,1]$ into its final interval. \textsf{Jordan} and \textsf{Gram Newton--Schulz} multiply the output of their final iteration by a constant that centers this interval at one. \Cref{alg:polar-switch-methods} gives the iterations of \texttt{Polar Switch}.\label{tab:circuit-cost}}
\begin{tabular}{@{}lccc@{}}
\toprule Method & Memory & FLOPs & Final interval \\ \midrule
\textsf{Jordan}~\citep{jordan2024muon} & $n^2(2\alpha+2)$ & $(15\alpha+5)n^3$ & $[0.562544,1.437456]$ \\
\textsf{Polar Express}~\citep{amsel2026polar} & $n^2(2\alpha+2)$ & $(15\alpha+5)n^3$ & $[0.858872,1.141128]$ \\
\textsf{Gram Newton--Schulz}~\citep{zhang2026gram} & $n^2(2\alpha+4)$ & $(6\alpha+14)n^3$ & $[0.787021,1.212979]$ \\
\midrule
\textbf{\textsf{Polar Switch 2-55}} & $\boldsymbol{n^2(2\alpha+2)}$ & $\boldsymbol{(6\alpha+10)n^3}$ & $[0.872263,1.106236]$ \\
\textsf{Polar Switch 2-64} & $n^2(2\alpha+2)$ & $(6\alpha+10)n^3$ & $[0.897110,1.081213]$ \\
\textsf{Polar Switch 2-54} & $n^2(2\alpha+2)$ & $(6\alpha+9)n^3$ & $[0.771715,1.201363]$ \\
\textsf{Polar Switch 2-512} & $n^2(2\alpha+2)$ & $(9\alpha+8)n^3$ & $[0.894082,1.091006]$ \\
\textsf{Polar Switch 2-4111} & $n^2(2\alpha+2)$ & $(12\alpha+7)n^3$ & $[0.930600,1.056149]$ \\
\textsf{Polar Switch 2-11111} & $n^2(2\alpha+2)$ & $(15\alpha+5)n^3$ & $[0.867287,1.118678]$ \\
\midrule
\textbf{\textsf{Polar Switch 3-55}} & $\boldsymbol{n^2(2\alpha+3)}$ & $\boldsymbol{(6\alpha+10)n^3}$ & $[0.898309,1.080954]$ \\
\textsf{Polar Switch 3-64} & $n^2(2\alpha+3)$ & $(6\alpha+10)n^3$ & $[0.913241,1.070202]$ \\
\textsf{Polar Switch 3-54} & $n^2(2\alpha+3)$ & $(6\alpha+9)n^3$ & $[0.800196,1.178895]$ \\
\bottomrule
\end{tabular}
\end{table}

We make three contributions.
(i) We introduce \texttt{Polar Switch}, a FLOP-efficient orthogonalization method for Muon based on high-degree polynomial circuits.
(ii) We develop efficient GPU implementations of \texttt{Polar Switch} with two or three square buffers.
Each kernel combines a matrix product and its additions before one \texttt{float16} store, and we select coefficients that try to keep this half-precision computation numerically stable.
(iii) Across three LLM pretraining setups, \texttt{Polar Switch} reaches final validation losses similar to those of established orthogonalization methods.

The remainder of the paper is structured as follows. \Cref{sec:related-work} reviews related work.
\Cref{sec:method} presents \texttt{Polar Switch}, explains how we design its circuits and coefficients under a FLOP budget, and describes its GPU implementation.
\Cref{sec:experiments} compares \texttt{Polar Switch} with quintic baselines in LLM
pretraining, runtime, and approximation accuracy.
\Cref{app:coefficients} gives the theory, the numerical search, and the coefficients of \texttt{Polar Switch}.
\Cref{app:campaign} gives the detailed experimental setup for LLM pretraining
and the runtime and approximation accuracy experiments.

\section{Related Work}
\label{sec:related-work}

\paragraph{Polar approximation and polynomial design.}
Polynomial polar iterations~\citep{bjorck1971iterative} and matrix sign methods~\citep{higham1986computing,higham2008functions}
are used in eigenvalue algorithms~\citep{denman1976matrix,nakatsukasa2013stable}, control, and model reduction~\citep{roberts1980linear}.
Rational alternatives include sign iterations~\citep{kenney1991rational}, QR-based dynamically weighted Halley (QDWH)~\citep{nakatsukasa2010optimizing},
and Zolotarev approximants~\citep{nakatsukasa2016computing}. Polar iterations also have backward stability analyses~\citep{nakatsukasa2012backward}.
Broader work covers matrix roots~\citep{higham1987computing,higham1997stable,deadman2013blocked,bini2005algorithms,iannazzo2006newton},
fractional powers~\citep{higham2011schur}, general matrix function computation~\citep{davies2003schur,hale2008computing,guttel2013rational,higham2005scaling,almohy2011computing},
and manifold retractions using projected polynomials~\citep{gawlik2018high}.
Minimax approximation minimizes the largest error over an interval, with solutions characterized by equioscillation~\citep{trefethen2019approximation}
and computed by Remez methods~\citep{pachon2009barycentric}.
Related problems include polynomial sign approximation~\citep{eremenko2007uniform}, rational minimax~\citep{filip2018rational}
and AAA approximation~\citep{nakatsukasa2018aaa}, and Parks--McClellan filter design~\citep{parks1972chebyshev,mcclellan1973computer}.
Paterson--Stockmeyer~\citep{paterson1973number} and related methods~\citep{sastre2018efficient} reduce the number of matrix multiplications.
Other work characterizes which polynomials can be evaluated with a fixed number of products~\citep{jarlebring2026polynomial}.
Chebyshev acceleration~\citep{grishina2026accelerating} constructs odd polynomials with large derivatives at zero to increase small singular values more quickly.
\texttt{Polar Express}~\citep{amsel2026polar} proves that greedy minimax fitting is optimal for a prescribed degree sequence, and its implementation uses quintics.
Dynamic programming has also been used to select minimax compositions for homomorphic comparison under multiplication and depth constraints~\citep{lee2022minimax}.
\texttt{Polar Switch} selects high-degree circuits under FLOP and buffer budgets that account for symmetric GPU kernels and adds a half-precision rounding constraint for numerical stability.

\paragraph{Efficient Muon orthogonalization.}
Equivalent formulas can round differently because their intermediate computations differ~\citep{higham2002accuracy}, and small singular values are sensitive to reduced precision~\citep{boutsikas2024small}.
Mixed precision is studied in linear algebra~\citep{baboulin2009accelerating,carson2017new,abdelfattah2021survey}
and training~\citep{gupta2015deep,wang2018training}.
\texttt{Gram Newton--Schulz}~\citep{zhang2026gram} accumulates square multipliers between rectangular updates, and Dion3~\citep{amsel2026dion3} uses it for distributed training.
AMO~\citep{zhuang2026amo} adapts iteration counts, Dion~\citep{ahn2025dion} uses low-rank updates,
Dion2~\citep{ahn2025dion2} selects rows or columns, Hierarchical Muon~\citep{tang2026hierarchical} uses tiles,
and CacheMuon~\citep{dev2026cachemuon} reuses transforms.
Turbo-Muon~\citep{boissin2026turbo} uses an almost-orthogonal preconditioner for faster updates, and PRISM~\citep{yang2026prism} computes matrix functions adaptively.
IFNSO~\citep{hu2026ifnso} replaces the iterations with one polynomial in selected matrix powers and learns its coefficients.
A cubic Newton--Schulz schedule~\citep{huang2026how} uses ten instead of fifteen dominant matrix products and reaches a similar validation loss.
MuonBP~\citep{khaled2026muonbp}, CMuon~\citep{chen2026cmuon}, and DMuon~\citep{chen2026dmuon} support distributed computation.
\texttt{Polar Switch} builds on polynomial Newton--Schulz
iterations with efficient GPU implementations.

\paragraph{Muon and matrix optimization.}
Muon~\citep{jordan2024muon} forms matrix updates by approximately orthogonalizing matrix momentum
and has been used for efficient LLM training~\citep{liu2025muon,ai2025practical}.
Other optimizers use diagonal gradient statistics, as in AdaGrad~\citep{duchi2011adaptive},
or factored second moments, as in Adafactor~\citep{shazeer2018adafactor}.
K-FAC~\citep{martens2015optimizing} and EKFAC~\citep{george2018fast} approximate curvature using Kronecker factors.
Shampoo~\citep{gupta2018shampoo} and Distributed Shampoo~\citep{shi2023distributed} use matrix preconditioners,
and SOAP~\citep{vyas2025soap} applies Adam in the eigenbasis of Shampoo's preconditioners.
Spectral descent~\citep{carlson2015stochastic,carlson2015preconditioned} exploits singular vectors.
Analyses of Muon cover finite Newton--Schulz iterations~\citep{kim2026convergence}, inexact polar updates~\citep{shulgin2025beyond},
modular duality~\citep{bernstein2025modular}, norm-constrained optimization~\citep{pethick2025training},
preconditioning~\citep{ma2026preconditioning} (including PolarGrad~\citep{lau2026polargrad} and Newton--Muon~\citep{du2026newton}), curvature~\citep{su2025isotropic},
stability~\citep{nguyen2026spectral,qi2026delving}, convergence~\citep{shen2026convergence,li2025note}, and batch size~\citep{sato2026convergence}.
Broader spectral scaling theory addresses feature learning~\citep{yang2024spectral}.
Extensions add symmetry constraints~\citep{lau2026symmetry} or change the normalization~\citep{li2026normuon},
the preconditioning~\citep{xu2026fismo}, the optimization subspace~\citep{refael2025sumo},
or the coordinate system~\citep{gong2026aro}.
Activation-Keyed Momentum (AK-Momentum)~\citep{hong2026activation} introduces anisotropic momentum.
DeMuon~\citep{he2025demuon} addresses decentralized optimization. Applications include diffusion training~\citep{li2026scaling},
low-rank adaptation with ISO-LoRA~\citep{zhu2026rank}, and LLM training~\citep{khona2026soap}.
\texttt{Polar Switch} changes Muon's polar approximation
by designing the polynomial circuits used to form its matrix updates.

\section{Polar Switch}
\label{sec:method}

We develop \texttt{Polar Switch} in three steps.
First, we define polynomial circuits and how they change the singular values (\Cref{sec:circuits}).
Second, we describe how we implement each circuit on a GPU in half-precision floating point and how we try to make it numerically stable (\Cref{sec:implementation}).
Third, we state the design problem that \texttt{Polar Switch} uses to choose its circuits and their coefficients, and we give the resulting \texttt{Polar Switch} methods (\Cref{sec:stage-solve,sec:search}).

\subsection{Polynomial Circuits}
\label{sec:circuits}

Let $X\in\mathbb{R}^{n\times m}$ with $n\le m$.
Each outer iteration $t$ (\Cref{def:circuit}) calculates a polynomial $h_t(XX^\mathsf{T})$ of the Gram matrix and updates $X\leftarrow h_t(XX^\mathsf{T})X$.
Evaluating a high-degree polynomial $h_t$ at $XX^\mathsf{T}$ needs intermediate matrices, and the GPU memory limits how many of them we can store.
\texttt{Polar Switch} therefore stores them in $g\ge2$ reusable square buffers $B_1,\ldots,B_g\in\mathbb{R}^{n\times n}$.

\begin{definition}[Outer iteration and circuit]
\label{def:circuit}
An \emph{outer iteration} updates $X$ with a \emph{circuit}, a fixed sequence of buffer updates that forms a straight-line program~\citep{miller1988efficient}.
The circuit first stores $XX^\mathsf{T}$ in $B_1$ and then performs $k$ \emph{inner iterations} $\tau=1,\ldots,k$ in sequence:
\begin{equation}
 B_{w_\tau}\leftarrow a_\tau B_{i_\tau}B_{j_\tau}+b_\tau B_{r_\tau}+c_\tau I,
 \qquad w_\tau,i_\tau,j_\tau,r_\tau\in\{1,\ldots,g\},\quad w_\tau\notin\{i_\tau,j_\tau\}.
 \label{eq:instruction}
\end{equation}
The output buffer is $B_{w_\tau}$, the two factors are $B_{i_\tau}$ and $B_{j_\tau}$, the addend is $B_{r_\tau}$, and $a_\tau,b_\tau,c_\tau$ are real coefficients.
We call $k$ the \emph{inner depth}.
The outer iteration ends with $X\leftarrow B_{w_k}X$.
A circuit obeys three rules.
First, an inner iteration reads only buffers that hold values, namely $B_1$ and the buffers that earlier inner iterations wrote, so $i_\tau,j_\tau,r_\tau\in\{w_0,w_1,\ldots,w_{\tau-1}\}$ with $w_0=1$.
Second, the output index is not a factor index, $w_\tau\notin\{i_\tau,j_\tau\}$.
Third, the output index can be the addend index, $w_\tau=r_\tau$.
\Cref{sec:implementation} uses the GPU kernel to explain the second and third rules.
\Cref{alg:standard} gives the general form of \texttt{Polar Switch}, and \Cref{alg:polar-switch-methods} gives the circuits and coefficients of each method.
Each outer iteration $t$ has its own circuit, and we omit $t$ from its indices and coefficients.
\end{definition}

\begin{algorithm}[htbp]
\caption{General form of \texttt{Polar Switch}}
\label{alg:standard}
\begin{algorithmic}[1]
\Require $X\in\mathbb{R}^{n\times m}$, $n\le m$; $T$ outer iterations, where outer iteration $t$ has a circuit (\Cref{def:circuit}) with $k_t$ inner iterations
\Ensure Approximation to the polar factor of $X$
\State $X\gets\texttt{float32}(X);\quad X\gets X/(\lVert X\rVert_F+10^{-7});\quad X\gets\texttt{float16}(X)$
\For{$t=1,\ldots,T$}
  \State $B_1\gets XX^\mathsf{T}$
  \For{$\tau=1,\ldots,k_t$}
    \State $B_{w_\tau}\gets a_\tau B_{i_\tau}B_{j_\tau}+b_\tau B_{r_\tau}+c_\tau I$ \Comment{One fused kernel; $w_\tau\notin\{i_\tau,j_\tau\}$}
  \EndFor
  \State $X\gets B_{w_{k_t}}X$
\EndFor
\State \Return $X$
\end{algorithmic}
\end{algorithm}

Each initialized buffer holds a polynomial in $XX^\mathsf{T}$, because $B_1=XX^\mathsf{T}$ and each inner iteration combines buffers with a product, a sum, and a diagonal shift.
Let $X=U\Sigma V^\mathsf{T}$ be the compact SVD of $X$, so that $XX^\mathsf{T}=U\Sigma^2U^\mathsf{T}$.
Then after inner iteration $\tau$, each initialized buffer is $B_i=U\beta_i^{(\tau)}(\Sigma)U^\mathsf{T}$,\footnote{If $X$ has rank below $n$, each buffer also has the term $\beta_i^{(\tau)}(0)(I-UU^\mathsf{T})$. This term vanishes in $B_{w_k}X$, so the outer iteration below is the same for all ranks.} where the polynomials $\beta_i^{(\tau)}$ in the singular value $x$ follow the scalar circuit
\begin{equation}
 \beta_1^{(0)}=x^2,
 \qquad
 \beta_{w_\tau}^{(\tau)}=a_\tau\beta_{i_\tau}^{(\tau-1)}\beta_{j_\tau}^{(\tau-1)}+b_\tau\beta_{r_\tau}^{(\tau-1)}+c_\tau,
 \qquad
 \beta_i^{(\tau)}=\beta_i^{(\tau-1)}\ (i\ne w_\tau),
 \qquad \tau=1,\ldots,k.
 \label{eq:scalar-circuit}
\end{equation}
The last output buffer holds $B_{w_k}=h_t(XX^\mathsf{T})$.
The outer iteration gives $h_t(XX^\mathsf{T})X=Up_t(\Sigma)V^\mathsf{T}$ with the odd polynomial $p_t(x)=x\beta_{w_k}^{(k)}=xh_t(x^2)$ of degree $1+2\deg h_t$.
Thus, it replaces each singular value $x$ with $p_t(x)$, provided $p_t(x)\ge0$.
After $T$ outer iterations with polynomials $p_1,\ldots,p_T$, the result is $Up(\Sigma)V^\mathsf{T}$ with $p=p_T\circ\cdots\circ p_1$.

\begin{definition}[Circuit family]
\label{def:class}
Let $\mathcal C_k^g$ be the set of circuits with inner depth $k$ on $g$ square buffers, over all buffer indices $w_\tau,i_\tau,j_\tau,r_\tau$ and coefficients $a_\tau,b_\tau,c_\tau$, $\tau=1,\ldots,k$.
A circuit $C\in\mathcal C_k^g$ calculates the polynomial $p_C(x)=x\beta_{w_k}^{(k)}$ in \eqref{eq:scalar-circuit}.
The \emph{family}
\[
 \mathcal P_k^g=\left\{p_C:C\in\mathcal C_k^g\right\}
\]
is the set of odd polynomials that $k$ inner iterations on $g$ square buffers can calculate.
\end{definition}

Let $\mathcal O_{2^{k+1}+1}$ be the set of odd polynomials of degree at most $2^{k+1}+1$.
The degree of a product is at most the sum of the degrees of its factors, so $k$ inner iterations give $\deg h_t\le2^k$, and
\begin{equation}
 \mathcal P_k^g\subseteq\mathcal O_{2^{k+1}+1}.
 \label{eq:family-degree}
\end{equation}
A polynomial in $\mathcal O_{2^{k+1}+1}$ has $2^k+1$ coefficients, but for fixed buffer indices, a polynomial in $\mathcal P_k^g$ depends on only the $3k$ coefficients $a_\tau,b_\tau,c_\tau$, $\tau=1,\ldots,k$.\footnote{For $k\ge2$, the number of degrees of freedom is smaller than $3k$.
For $\tau<k$, multiplying the output of inner iteration $\tau$ by a constant $\kappa\ne0$ and dividing its later uses by $\kappa$ does not change $p_C$ (\Cref{app:intervals}).
For example, each nonzero $a_\tau$ with $\tau<k$ can be set to $1$, so each choice of buffer indices gives at most $2k+1$ degrees of freedom.
For $k\ge3$, $2k+1<2^k+1$, so each choice parametrizes a set of measure zero in $\mathcal O_{2^{k+1}+1}$, and the finitely many choices give a proper subset $\mathcal P_k^g$.}
A central step of \texttt{Polar Switch} is to optimize the circuits in $\mathcal C_k^g$.

For example, the first circuit of \textsf{PS3-55} in \eqref{eq:ps355} is in $\mathcal C_5^3$, and each inner iteration squares the output of the previous one ($i_\tau=j_\tau=w_{\tau-1}$, $a_\tau\ne0$).
Thus, $\deg h_t$ doubles to $2$, $4$, $8$, $16$, and $32$, and $p_t$ has degree $65=2^{k+1}+1$, the largest degree in \eqref{eq:family-degree}.
The iterations of Jordan~\citep{jordan2024muon}, \texttt{Polar Express}, and \texttt{Gram Newton--Schulz} can also be written as circuits.

\begin{example}[Standard quintic Newton--Schulz]
\label{ex:quintic}
The quintic iteration \eqref{eq:one-step} of Jordan~\citep{jordan2024muon} and \texttt{Polar Express} adds the linear term $c_tX$ in the final update $X\leftarrow ZX+c_tX$.
Since $(Z+c_tI)X=ZX+c_tX$, it is equivalent in exact arithmetic to a circuit with one inner iteration on $g=2$ buffers:
\[
 B_2\leftarrow a_tB_1^2+b_tB_1+c_tI,
 \qquad
 X\leftarrow B_2X.
\]
In finite precision, the two forms can give different results, because they round different intermediate matrices.
\end{example}

\texttt{Polar Express}~\citep[Remark~3.2]{amsel2026polar} permits different polynomial degrees across iterations, but its implementation uses degree five.
The remark refers to dynamic programming that chooses the best sequence of polynomial iterations under a budget on the number of multiplications~\citep{lee2022minimax}.
If each matrix multiplication has the same cost, a nonic iteration (four multiplications) is 33\% more expensive than a quintic iteration (three).
However, the FLOP cost on GPUs is different from the number of matrix multiplications.
With symmetric kernels, a quintic iteration costs $(3\alpha+1)n^3$ FLOPs and a nonic iteration costs $(3\alpha+2)n^3$ FLOPs, where $\alpha=m/n$ (\Cref{prop:cost}).
A nonic iteration thus needs only 25\% more FLOPs at $\alpha=1$, and less for rectangular matrices.
Polynomial iterations of higher degree therefore need fewer FLOPs on GPUs than the multiplication count suggests.
This lower cost can allow more accurate polar approximations under the same FLOP budget, and \texttt{Polar Switch} uses it to construct circuits of high degree.

\begin{example}[Gram Newton--Schulz]
\label{ex:gns}
The standard \texttt{Gram Newton--Schulz} (\textsf{GNS}) iteration~\citep{zhang2026gram} uses the coefficients $(a_j,b_j,c_j)$, $j=1,\ldots,5$, of \texttt{Polar Express}, with an extra safety factor $1.05$.
Starting from $R_0\leftarrow XX^\mathsf{T}$, the iterations are:
\[
 \begin{array}{@{}llll@{}}
 Z_1\leftarrow b_1R_0+a_1R_0^2, & Q_1\leftarrow Z_1+c_1I, & Y_1\leftarrow R_0Z_1+c_1R_0, & R_1\leftarrow Z_1Y_1+c_1Y_1,\\
 Z_2\leftarrow b_2R_1+a_2R_1^2, & Q_2\leftarrow Q_1Z_2+c_2Q_1, & X\leftarrow Q_2X, & R_2\leftarrow XX^\mathsf{T},\\
 Z_3\leftarrow b_3R_2+a_3R_2^2, & Q_3\leftarrow Z_3+c_3I, & Y_3\leftarrow R_2Z_3+c_3R_2, & R_3\leftarrow Z_3Y_3+c_3Y_3,\\
 Z_4\leftarrow b_4R_3+a_4R_3^2, & Q_4\leftarrow Q_3Z_4+c_4Q_3, & Y_4\leftarrow R_3Z_4+c_4R_3, & R_4\leftarrow Z_4Y_4+c_4Y_4,\\
 Z_5\leftarrow b_5R_4+a_5R_4^2, & Q_5\leftarrow Q_4Z_5+c_5Q_4, & X\leftarrow Q_5X.
 \end{array}
\]
We use the names of the official code, except for $Y_j$.
$R_j$ tracks the Gram matrix after quintic step $j$, $Q_j$ collects the matrix that multiplies $X$, and $Z_j$ and $Y_j$ are intermediate matrices.
In exact arithmetic, it has the equivalent circuit form
\[
 \begin{array}{@{}llll@{}}
 B_1\leftarrow XX^\mathsf{T} & B_2\leftarrow a_1B_1^2+b_1B_1+c_1I & B_3\leftarrow B_1B_2,\ B_1\leftarrow B_2B_3 & \\
     & B_3\leftarrow a_2B_1^2+b_2B_1+c_2I & B_1\leftarrow B_2B_3 & X\leftarrow B_1X\\
 B_1\leftarrow XX^\mathsf{T} & B_2\leftarrow a_3B_1^2+b_3B_1+c_3I & B_3\leftarrow B_1B_2,\ B_1\leftarrow B_2B_3 & \\
     & B_3\leftarrow a_4B_1^2+b_4B_1+c_4I & B_4\leftarrow B_2B_3,\ B_2\leftarrow B_1B_3,\ B_1\leftarrow B_3B_2 & \\
     & B_3\leftarrow a_5B_1^2+b_5B_1+c_5I & B_2\leftarrow B_4B_3 & X\leftarrow B_2X.
 \end{array}
\]
Each row runs from left to right, and a product without coefficients has $a_\tau=1$ and $b_\tau=c_\tau=0$.
\end{example}

The two outer iterations of \textsf{GNS} have inner depths $5$ and $9$ on $g=4$ buffers, so they are circuits in $\mathcal C_5^4$ and $\mathcal C_9^4$.
Their polynomials have degrees $25$ and $125$, but $5$ and $9$ inner iterations can reach the full degrees $65$ and $1025$ in \eqref{eq:family-degree}.
The gap comes from the mixed products ($i_\tau\ne j_\tau$), such as $B_3\leftarrow B_1B_2$ and $B_1\leftarrow B_2B_3$, which track the Gram matrix of the updated $X$.

\texttt{Polar Switch} does not impose full degree as a constraint when optimizing its circuits and their coefficients, but all circuits we found reach full degree. For example, when $g=2$, the alternating writes of our search (\Cref{sec:search}) make each inner iteration square the output of the previous one ($i_\tau=j_\tau=w_{\tau-1}$), and for $g=3$, all circuits that we found also do so, so they reach the full degree.
For example, \textsf{PS3-55} has inner depths $5$ and $5$ and degrees $65$ and $65$ (\Cref{alg:polar-switch-methods}).
This suggests the hypothesis that, for a given inner depth, the most accurate way to approximate the polar factor is to use the highest possible degree, even within the restricted family $\mathcal P_k^g$.
To reach the full degree $2^{k+1}+1$, the largest degree among the buffers must double at every inner iteration.
Then only the output of the previous inner iteration has the largest degree, so each inner iteration must square it, and a circuit with a mixed product has a lower degree.
This explains why \textsf{GNS} reaches a lower total degree, $25\cdot125=3125$, than \textsf{PS2-55}, $65\cdot65=4225$, at a higher FLOP cost (\Cref{tab:circuit-cost}).

\subsection{GPU Implementation, Cost, and Rounding}
\label{sec:implementation}

In \Cref{alg:standard}, each inner iteration $\tau$ runs as one GPU kernel.
Each buffer holds a polynomial in $XX^\mathsf{T}$ (\Cref{sec:circuits}), and polynomials in the same symmetric matrix are symmetric and commute, so each product $B_{i_\tau}B_{j_\tau}=B_{j_\tau}B_{i_\tau}$ is symmetric.
A symmetric matrix multiplication kernel calculates only one triangle of such a product and copies it to the other triangle.
\texttt{Polar Switch} extends such a kernel from the QuACK kernel library (\Cref{app:campaign}).
For inner iteration $\tau$, the kernel multiplies the product by $a_\tau$, adds $b_\tau B_{r_\tau}$, and adds $c_\tau$ to each diagonal entry in its \emph{epilogue}, without allocating an identity matrix.\footnote{In the epilogue of a matrix multiplication kernel,
additional operations, such as scaling and matrix addition, can be applied to the
accumulated product before storing the result in global memory, without requiring
a separate kernel launch.}
The kernel therefore writes each output $B_{w_\tau}$ to memory once, and the additions need no separate kernels or extra matrices.

This kernel explains the second and third rules of \Cref{def:circuit}.
Each entry of the product $B_{i_\tau}B_{j_\tau}$ needs a full row and a full column of the two factors, so the kernel still reads the factors while it writes the output, and the output cannot overwrite a factor ($w_\tau\notin\{i_\tau,j_\tau\}$).
Each entry of the output needs only the same entry of the addend $B_{r_\tau}$, so the output can overwrite the addend ($w_\tau=r_\tau$).

The circuits of \texttt{Polar Switch} use $g$ square buffers $B_1,\ldots,B_g$, each containing $n^2$ elements.
They also use two rectangular buffers,
each containing $mn$ elements. One holds the current matrix $X$.
The other holds the result of the update $X\leftarrow B_{w_k}X$ in \Cref{alg:standard}.
The result needs a separate buffer because the multiplication still needs the entries
of the current $X$. Once the update is complete, the result becomes the new $X$.

The products of a circuit have different costs on a GPU, so we count FLOPs.
We count only the leading-order FLOPs of matrix products, because they are dominant for large matrices.
Let $\alpha=m/n\ge1$ be the aspect ratio.
A symmetric kernel halves the FLOPs of a product because it calculates only one triangle.

\begin{proposition}[Cost and memory of an outer iteration]
\label{prop:cost}
An outer iteration with inner depth $k$ costs
\[
  \underbrace{\alpha n^3}_{XX^\mathsf{T}}
  +\underbrace{kn^3}_{k\text{ products }B_{i_\tau}B_{j_\tau}}
  +\underbrace{2\alpha n^3}_{B_{w_k}X}
  =(3\alpha+k)n^3\ \text{FLOPs}.
\]
With $g$ square buffers and two rectangular buffers, a method stores $n^2(2\alpha+g)$ elements.
\end{proposition}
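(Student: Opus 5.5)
The plan is to fix the FLOP convention first: a general product of an $p\times q$ matrix with a $q\times r$ matrix costs $2pqr$ FLOPs (one multiply and one add per inner-product term). A symmetric kernel, which computes only one triangle of a symmetric output, costs half of that to leading order. I will then count the three groups of products in one outer iteration of \Cref{alg:standard} separately, keeping only leading-order terms as the paper does.

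\textbf{The Gram matrix and the inner products.}
The Gram matrix $B_1\leftarrow XX^\mathsf{T}$ multiplies an $n\times m$ matrix by an $m\times n$ matrix, so a general kernel would use $2n^2m=2\alpha n^3$ FLOPs. Its output is symmetric, so the symmetric kernel computes one triangle, about $n(n+1)/2$ entries at $2m$ FLOPs each. This gives $\alpha n^3$ to leading order.

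For each of the $k$ inner iterations, I first justify that the symmetric kernel applies. By the discussion after \Cref{def:circuit}, every initialized buffer is a polynomial in $XX^\mathsf{T}$, so any two buffers commute and are symmetric. Hence $B_{i_\tau}B_{j_\tau}=B_{j_\tau}B_{i_\tau}=(B_{i_\tau}B_{j_\tau})^\mathsf{T}$ is symmetric. One triangle of an $n\times n$ by $n\times n$ product costs about $n^3$ FLOPs. The epilogue operations (scaling by $a_\tau$, adding $b_\tau B_{r_\tau}$, shifting the diagonal by $c_\tau$) are $O(n^2)$ and are dropped at leading order. The $k$ inner iterations therefore contribute $kn^3$.

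\textbf{The final update.}
The update $B_{w_k}X$ multiplies an $n\times n$ matrix by an $n\times m$ matrix. Its output is rectangular and not symmetric, so the full count $2n^2m=2\alpha n^3$ applies. Summing the three groups gives $(3\alpha+k)n^3$.

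\textbf{Memory.}
I count the buffers listed in \Cref{sec:implementation}: $g$ square buffers of $n^2$ elements each, and two rectangular buffers of $mn=\alpha n^2$ elements each. The two rectangular buffers are needed because the output of $B_{w_k}X$ cannot overwrite $X$ while $X$ is still being read. The total is $n^2(2\alpha+g)$. I also note that the rules of \Cref{def:circuit} ($w_\tau\notin\{i_\tau,j_\tau\}$, and overwriting the addend allowed) guarantee that no additional temporary matrix is needed, and the identity shift is applied in the epilogue, so no identity matrix is stored.

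\textbf{Main obstacle.}
The only delicate point is conventions rather than mathematics. I must state explicitly that ``FLOPs'' means leading-order FLOPs under the $2pqr$ convention, and that symmetric kernels give exactly a factor-$\tfrac12$ saving, ignoring the $O(n^2m)$ diagonal overhead. I must also make sure the symmetry argument covers mixed products ($i_\tau\ne j_\tau$), not just squares; the commutativity of polynomials in the same symmetric matrix handles this.
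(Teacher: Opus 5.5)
Your proposal is correct and follows the paper's proof: the same three-way split into the symmetric Gram product ($\alpha n^3$), $k$ symmetric square products ($n^3$ each), and the general final product $B_{w_k}X$ ($2\alpha n^3$), and the same memory count $gn^2+2mn$. One small slip in your last paragraph: the extra cost of computing the diagonal in a symmetric kernel is $O(nm)$, not $O(n^2m)$, since $O(n^2m)$ is the order of the leading term itself; it is about $nm$ FLOPs for $XX^\mathsf{T}$ and $n^2$ for each square product.
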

\begin{proof}
The outer iteration calculates three types of products.
The Gram matrix $XX^\mathsf{T}$ is symmetric, and it costs $mn^2=\alpha n^3$ FLOPs.
Each of the $k$ inner iterations calculates a symmetric $n\times n$ product $B_{i_\tau}B_{j_\tau}$, which costs $n^3$ FLOPs.
The final update $B_{w_k}X$ is a general product, and it costs $2mn^2=2\alpha n^3$ FLOPs.
The $g$ square buffers hold $gn^2$ elements, and the two rectangular buffers hold $2mn=2\alpha n^2$ elements.
\end{proof}

We now explain how \texttt{Polar Switch} tries to keep its half-precision computation numerically stable.
\texttt{Polar Switch} normalizes $X$ in \texttt{float32} and converts it to \texttt{float16}.\footnote{Following the standard \texttt{Gram Newton--Schulz} implementation~\citep{zhang2026gram}, we normalize in \texttt{float32} before converting to \texttt{float16}. We use \texttt{float16} rather than \texttt{bfloat16} because it has 10 fraction bits instead of 7.}
During the iterations, $X$ and all buffers are stored in \texttt{float16}.
Each product runs on \texttt{float16} Tensor Cores and accumulates in \texttt{float32}.
The epilogue applies the scaling by $a_\tau$, the addend $b_\tau B_{r_\tau}$, and the diagonal shift $c_\tau I$ in \texttt{float32}, and then the kernel converts the result to \texttt{float16} and stores it in $B_{w_\tau}$.
This avoids an additional square buffer and an intermediate \texttt{float16} matrix, which reduces memory traffic and avoids an additional rounding step.
The main rounding of an inner iteration is likely the \texttt{float16} store of its result, which rounds each entry to the nearest \texttt{float16} value.
The Gram matrix $B_1\leftarrow XX^\mathsf{T}$ and the output $X\leftarrow B_{w_k}X$ are stored in the same way, so an outer iteration with inner depth $k$ has $k+2$ stores.
To try to select coefficients $a_\tau,b_\tau,c_\tau$ for which the circuit stays stable under these roundings, we add a multiplicative factor for each store to the scalar circuit \eqref{eq:scalar-circuit}.

\begin{definition}[Perturbed outer iteration]
\label{def:perturbed}
Let $C\in\mathcal C_k^g$ be a circuit with buffer indices $w_\tau,i_\tau,j_\tau,r_\tau$ and coefficients $a_\tau,b_\tau,c_\tau$, $\tau=1,\ldots,k$, let $s\ge1$ be a safety factor, and let $\mathbf{v}=(v_0,\ldots,v_{k+1})\in[s^{-1},s]^{k+2}$.
The perturbed outer iteration is
\begin{equation}
\begin{aligned}
 &\widetilde\beta_1^{(0)}=v_0x^2,
 \qquad
 \widetilde\beta_{w_\tau}^{(\tau)}=v_\tau\left(a_\tau\widetilde\beta_{i_\tau}^{(\tau-1)}\widetilde\beta_{j_\tau}^{(\tau-1)}+b_\tau\widetilde\beta_{r_\tau}^{(\tau-1)}+c_\tau\right),\\
 &\widetilde\beta_i^{(\tau)}=\widetilde\beta_i^{(\tau-1)}\ (i\ne w_\tau),
 \qquad \tau=1,\ldots,k,
 \qquad
 \widetilde p_C(x;\mathbf{v})=v_{k+1}\,x\,\widetilde\beta_{w_k}^{(k)}.
\end{aligned}
 \label{eq:perturbed-circuit}
\end{equation}
The rounding factors $v_0$, $v_\tau$, and $v_{k+1}$ model the stores of the Gram matrix, of inner iteration $\tau$, and of the output.
For $\mathbf{v}=(1,\ldots,1)$, $\widetilde p_C(x;\mathbf{v})=p_C(x)$.
\end{definition}

\subsection{Design Problem}
\label{sec:stage-solve}

In exact arithmetic, after the normalization in \Cref{alg:standard}, each singular value of $X$ is at most one.
As in \texttt{Polar Express}, we optimize the polynomials for singular values in $[\ell_0,1]$ with $\ell_0=0.001$ (\Cref{app:coefficients} explains this choice).
We fix the number $g$ of square buffers.
A \emph{schedule} $(k_1,\ldots,k_T)$ lists the inner depths of $T$ outer iterations, and its cost is $\sum_{t=1}^T(3\alpha+k_t)n^3$ FLOPs (\Cref{prop:cost}).
We want to find the schedule and the polynomials $p_t\in\mathcal P_{k_t}^g$ with the smallest worst-case error under a FLOP budget $f_{\max}n^3$.
In matrix form, the problem is
\begin{equation}
  \min_{\substack{T\ge1,\ k_1,\ldots,k_T\ge1\\ \sum_{t=1}^T(3\alpha+k_t)\le f_{\max}}}
  \ \inf_{p_t\in\mathcal P_{k_t}^g,\ t=1,\ldots,T}
  \ \max_{\substack{X\in\mathbb{R}^{n\times m}\\
    \sigma_i(X)\in[\ell_0,1],\ i=1,\ldots,n}}
  \left\|\operatorname{polar}(X)-p(X)\right\|_2,
  \label{eq:matrix-budget-problem}
\end{equation}
where $\sigma_1(X),\ldots,\sigma_n(X)$ are the singular values of $X$, $p=p_T\circ\cdots\circ p_1$ is the composition that the circuits calculate, and $p(X)=Up(\Sigma)V^\mathsf{T}$ for the compact SVD $X=U\Sigma V^\mathsf{T}$ is the output of the $T$ outer iterations in exact arithmetic.

\begin{proposition}[Scalar reduction]
\label{prop:scalar}
The matrix problem \eqref{eq:matrix-budget-problem} has the same objective, and hence the same value and the same minimizers when they exist, as the scalar minimax problem
\begin{equation}
  \min_{\substack{T\ge1,\ k_1,\ldots,k_T\ge1\\ \sum_{t=1}^T(3\alpha+k_t)\le f_{\max}}}
  \ \inf_{p_t\in\mathcal P_{k_t}^g,\ t=1,\ldots,T}
  \ \max_{x\in[\ell_0,1]}\left|1-p(x)\right|.
  \label{eq:composition-minimax}
\end{equation}
\end{proposition}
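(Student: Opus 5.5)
The plan is to show that, for every fixed schedule $(k_1,\ldots,k_T)$ and every fixed choice of polynomials $p_t\in\mathcal P_{k_t}^g$, the inner maximum over matrices in \eqref{eq:matrix-budget-problem} equals the inner maximum over scalars in \eqref{eq:composition-minimax}. Once the innermost objectives agree pointwise as functions of $(T,k_1,\ldots,k_T,p_1,\ldots,p_T)$, the infimum over the $p_t$, the minimum over feasible schedules, the optimal values and the minimizers all coincide, since the outer optimizations run over identical feasible sets. So everything reduces to the identity
\[
 \max_{X:\ \sigma_i(X)\in[\ell_0,1]}\left\|\operatorname{polar}(X)-p(X)\right\|_2=\max_{x\in[\ell_0,1]}|1-p(x)|
\]
for the composite odd polynomial $p=p_T\circ\cdots\circ p_1$.

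\textbf{Upper bound.} Take $X$ with compact SVD $X=U\Sigma V^\mathsf{T}$. Every singular value lies in $[\ell_0,1]$, so every $\sigma_i>0$ and $X$ has full rank $n$; hence $U$ is $n\times n$ orthogonal and $\operatorname{polar}(X)=UV^\mathsf{T}$. By the definition in \eqref{eq:matrix-budget-problem}, $p(X)=Up(\Sigma)V^\mathsf{T}$, which is the exact-arithmetic output described after \eqref{eq:scalar-circuit}. Then
\[
 \operatorname{polar}(X)-p(X)=U\bigl(I-p(\Sigma)\bigr)V^\mathsf{T}.
\]
Since $U$ is orthogonal and $V$ has orthonormal columns, the spectral norm equals $\max_i|1-p(\sigma_i)|$. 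This is at most the scalar maximum.

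\textbf{Lower bound.} Let $x^\ast\in[\ell_0,1]$ attain $\max_x|1-p(x)|$; it exists because $p$ is continuous and the interval is compact. Choose $X=x^\ast[I_n\ 0]\in\mathbb{R}^{n\times m}$, whose singular values all equal $x^\ast$. The same computation gives the matrix error $|1-p(x^\ast)|$, so the matrix maximum is at least the scalar maximum. This also shows the matrix supremum is attained, which justifies writing max.

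I expect no real obstacle. The one point to state carefully is that $p(X)$ is defined through the singular value map, so no sign condition $p_t\ge0$ is needed: negative values of $p(\sigma)$ simply enter as $|1-p(\sigma)|$. A second point worth a sentence is that full rank comes from $\ell_0>0$, so the rank-deficient footnote case never arises.
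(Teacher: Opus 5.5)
Your proposal is correct and follows essentially the same route as the paper's proof. The SVD identity $\|\operatorname{polar}(X)-p(X)\|_2=\max_i|1-p(\sigma_i)|$ gives the upper bound, and any admissible $X$ with a singular value at a maximizer of $|1-p|$ on $[\ell_0,1]$ shows the bound is attained (your $x^\ast[I_n\ 0]$ is one such choice), so the objectives agree for every schedule and every choice of $p_t$. Your remarks that full rank follows from $\ell_0>0$ and that no sign condition $p_t\ge0$ is needed are correct details that the paper leaves implicit.
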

\begin{proof}
Since $\operatorname{polar}(X)=UV^\mathsf{T}$ and $U$ and $V$ have orthonormal columns, the spectral norm satisfies
\[
  \|{\operatorname{polar}(X)-p(X)}\|_2
  =\|I-p(\Sigma)\|_2
  =\max_{1\le i\le n}
  \left|1-p(\sigma_i(X))\right|
  \le\max_{x\in[\ell_0,1]}\left|1-p(x)\right|.
\]
The bound is attained when at least one singular value of $X$ is a maximizer of $|1-p(x)|$ on $[\ell_0,1]$, and such an $X$ is admissible in \eqref{eq:matrix-budget-problem}.
Thus, the two problems have the same objective for each schedule and each choice of $p_t\in\mathcal P_{k_t}^g$, $t=1,\ldots,T$.
\end{proof}

We design the approximation in \eqref{eq:composition-minimax} through intervals $[\ell_t,u_t]$ with $0\le\ell_t\le u_t$, $u_t>0$, and $(p_t\circ\cdots\circ p_1)([\ell_0,1])\subseteq[\ell_t,u_t]$ in exact arithmetic, $t=1,\ldots,T$.
Problem \eqref{eq:composition-minimax} asks for a final interval $[\ell_T,u_T]$ that is as close to one as possible.
Only the ratios $\ell_t/u_t$ matter, because an outer iteration can scale its output by any $\lambda>0$, and the next outer iteration can undo this scaling, without additional matrix products.
First, multiplying the coefficients $a_{k_t},b_{k_t},c_{k_t}$ of the last inner iteration of outer iteration $t$ by $\lambda$ multiplies $B_{w_{k_t}}$ and the output $B_{w_{k_t}}X$ by $\lambda$, so $[\ell_t,u_t]$ becomes $[\lambda\ell_t,\lambda u_t]$.
Second, outer iteration $t+1$ then receives $\lambda X$ in place of $X$, so its Gram matrix $B_1$ is multiplied by $\lambda^2$.
While $B_1$ holds this Gram matrix, we divide $a_\tau$ by $\lambda^2$ for each factor $B_1$ and $b_\tau$ by $\lambda^2$ if the addend is $B_1$.
For example, in the first inner iteration $B_{w_1}\leftarrow a_1B_1^2+b_1B_1+c_1I$, we divide $a_1$ by $\lambda^4$ and $b_1$ by $\lambda^2$.
Then each inner iteration of outer iteration $t+1$ stores the same matrix as before.
Third, we divide $a_{k_{t+1}},b_{k_{t+1}},c_{k_{t+1}}$ by $\lambda$, so $B_{w_{k_{t+1}}}$ is divided by $\lambda$, and the output $B_{w_{k_{t+1}}}(\lambda X)$ is the same as before.
Thus, all later outer iterations give the same outputs as before.
For $t=T$, no outer iteration follows, and the final centering (\Cref{thm:relation}) chooses the scale.
Therefore, we fix $u_t=1$ for all $t$ and maximize the lower endpoint $\ell_T$.

In the rounding model \eqref{eq:perturbed-circuit}, we also require each perturbed outer iteration to map $[0,1]$ into $[0,1]$, so that each output is a valid input of the next one.
We choose each $p_t\in\mathcal P_{k_t}^g$ through a circuit $C_t\in\mathcal C_{k_t}^g$ with $p_t=p_{C_t}$.
For $g$ square buffers, a FLOP budget $f_{\max}n^3$, and a largest inner depth $k_{\max}$, this gives the \emph{\texttt{Polar Switch} design problem}
\begin{equation}
\begin{aligned}
 &\max_{\substack{T\ge1,\ 1\le k_1,\ldots,k_T\le k_{\max}\\ \sum_{t=1}^T(3\alpha+k_t)\le f_{\max}}}\
 \max_{\substack{C_t\in\mathcal C_{k_t}^g,\ \ell_t\in[0,1]\\ t=1,\ldots,T}}\
 \ell_T\\[4pt]
 &\qquad\text{s.t.}\quad
 p_{C_t}([\ell_{t-1},1])\subseteq[\ell_t,1],\quad
 0\le\widetilde p_{C_t}(x;\mathbf{v})\le1,\quad x\in[0,1],\ \mathbf{v}\in[s^{-1},s]^{k_t+2},\ t=1,\ldots,T.
\end{aligned}
\label{eq:full-problem}
\end{equation}
The second constraint keeps each perturbed output in $[0,1]$ for all inputs in $[0,1]$, which is a design rule for numerical stability.
After centering, a feasible point of \eqref{eq:full-problem} has worst-case error at most $(1-\ell_T)/(1+\ell_T)$ in \eqref{eq:matrix-budget-problem} (\Cref{thm:relation}).

\subsection{Solving the Design Problem}
\label{sec:search}

For a given number $g$ of square buffers, we must find the schedule $(k_1,\ldots,k_T)$, and, for each $k_t$, a circuit $C_t\in\mathcal C_{k_t}^g$, which consists of its buffer indices $w_\tau,i_\tau,j_\tau,r_\tau$ and its coefficients $a_\tau,b_\tau,c_\tau$.
\Cref{app:coefficients} treats \eqref{eq:full-problem} in four steps.
First, for a fixed schedule, greedy circuits, which make each $\ell_t$ as large as possible, are optimal (\Cref{thm:greedy}), as for the quintic compositions of \texttt{Polar Express}.
Second, if the per-iteration problems are solved exactly, dynamic programming over costs selects the schedule with the largest $\ell_T$ (\Cref{thm:budget}).
Only this step depends on the aspect ratio $\alpha$, through the cost $(3\alpha+k_t)n^3$ of each outer iteration; the per-iteration problems and their circuits do not.
Third, for each inner depth $k_t$, we find the circuit numerically with a solver that GPT-6 Astra helped design, and interval arithmetic certifies the result (\Cref{app:minimax-solve}).
Fourth, a final scaling centers the interval $[\ell_T,1]$ at one (\Cref{thm:relation}), and a rescaling of the buffers keeps their values in range for \texttt{float16} (\Cref{app:intervals}).

For $g=2$, our numerical search uses alternating writes.
Call the two buffers $A$ and $B$.
An inner iteration that writes into $B$ cannot read $B$ as a factor, so both factors are $A$, and it has the form $B\leftarrow a_\tau A^2+b_\tau A+c_\tau I$ or $B\leftarrow a_\tau A^2+b_\tau B+c_\tau I$.
The next inner iteration in this search writes into $A$, and it has the form $A\leftarrow a_{\tau+1}B^2+b_{\tau+1}B+c_{\tau+1}I$ or $A\leftarrow a_{\tau+1}B^2+b_{\tau+1}A+c_{\tau+1}I$.
With this alternating write order fixed, only the addend of each inner iteration, $A$ or $B$, remains to be chosen.
With $k=6$ inner iterations, this gives $2^5=32$ alternating buffer patterns, because the first addend must be $A$, and we enumerate all of these patterns.

For $g=3$, the factors and the addends have many more choices.
Thus, we optimize only the fixed schedules $(5,5)$, $(6,4)$, and $(5,4)$, and we reduce the search space with degree-bound pruning designed with the help of GPT-6 Astra (\Cref{app:minimax-solve}).
The local search may still miss the optimal solution.

Our solutions are not guaranteed to be globally optimal.
The problem is nonconvex, so a local search can stop at a local optimum.
Also, the rounding constraint must hold for all inputs and all rounding factors.
The local search checks it only on samples, and we certify every accepted circuit by interval arithmetic, which becomes expensive as $k$ grows.

\label{sec:design}
We use $s=1.005$.
The name \textsf{Polar Switch $g$-$k_1\ldots k_T$}, or \textsf{PS$g$-$k_1\ldots k_T$}, gives the number $g$ of square buffers and the schedule $(k_1,\ldots,k_T)$, with each inner depth written as one digit.
\textsf{PS2} and \textsf{PS3} denote the methods with $g=2$ and $g=3$.
For example, \textsf{PS2-55} uses $g=2$ and $k_1=k_2=5$.
For $g=2$ and each number of outer iterations $T=2,3,4,5$, we use the schedule with $k_{\max}=5$ and the smallest total inner depth $k_1+\cdots+k_T$ whose centered final interval, as in \Cref{tab:circuit-cost}, has a lower endpoint of at least $0.85$.
For a fixed $T$, the cost $3\alpha T+k_1+\cdots+k_T$ orders the schedules in the same way for every $\alpha$, so this choice does not depend on $\alpha$.
A variant of the dynamic program of \Cref{app:search} that keeps one schedule for each cost and each $T$ gives the schedules of \textsf{PS2-55}, \textsf{PS2-512}, \textsf{PS2-4111}, and \textsf{PS2-11111} for $g=2$.
We also include \textsf{PS2-54} and \textsf{PS2-64} with fixed schedules, for which we optimize only the circuits.
\textsf{PS2-54} costs $n^3$ fewer FLOPs than \textsf{PS2-55} and does not reach $0.85$, and \textsf{PS2-64} has a better final interval than \textsf{PS2-55} at the same cost.
For $g=3$, \textsf{PS3-55}, \textsf{PS3-64}, and \textsf{PS3-54} use the fixed schedules above.
\Cref{tab:circuit-cost} gives the memory, FLOPs, and final intervals of all methods, and \Cref{alg:polar-switch-methods} gives all \texttt{Polar Switch} methods.

\section{Experiments}
\label{sec:experiments}

Each experiment run uses a single NVIDIA B200 GPU.
In \Cref{sec:training}, we compare \texttt{Polar Switch} with the baselines in LLM training by final validation loss and numerical stability.
In \Cref{sec:synthetic}, we compare the runtime and the approximation accuracy.
The four baselines are \textsf{Jordan}, \textsf{PE}, and \textsf{PE (BF16)} (\Cref{alg:baselines}), and \textsf{GNS} (\Cref{app:campaign}).
\textsf{PE (BF16)} uses the coefficients of \textsf{PE} with \texttt{bfloat16} storage and products and the normalization of \Cref{alg:baselines}.
\textsf{Jordan} and \textsf{GNS} are centered at the last iteration. They multiply the output of their last iteration by a constant, so that their final interval is symmetric about one.\footnote{\label{fn:jordan-centering}Without centering, the 5 iterations of \textsf{Jordan} map $[0.001,1]$ into $[0.470543,1.202369]$, which is not symmetric about one. Thus, its updates have a smaller norm than those of the other methods. In our earlier Gemma 3-1B pretraining experiments, we used \textsf{Jordan} without centering, and its validation loss was higher than other methods. After we centered its final interval at one, the validation losses became similar.}
\Cref{app:campaign} gives the details of the experimental setup.

\subsection{LLM Experiments}
\label{sec:training}

We use three LLM pretraining setups, which are Modded-NanoGPT \#91~\citep{jordan2024moddednanogpt}, Qwen3-0.6B~\citep{qwen2025qwen3}, and Gemma 3-1B~\citep{gemma2025gemma3}.
In each setup, all methods use the same training settings, and only the polar approximation for the matrix updates of Muon (NorMuon in Modded-NanoGPT \#91) changes.
\Cref{tab:training-setup} summarizes the setups, and \Cref{app:campaign}
gives the implementation details.

\begin{table}[htbp]
\centering
\caption{LLM training setups. Of the \texttt{Polar Switch} methods, the Gemma 3-1B setup uses only the methods with two outer iterations. Total FLOPs are estimated for one run and depend on the orthogonalization method.}
\label{tab:training-setup}
\begin{tabular}{@{}lrrrrr@{}}
\toprule
Model & Parameters & Tokens & Total FLOPs & Seeds & Total runs \\
\midrule
Modded-NanoGPT \#91 & 637,656,487 & 343,408,640 & $(2.413\text{--}2.418)\times10^{17}$ & 20 & 260 \\
Qwen3-0.6B & 596,049,920 & 12,000,428,032 & $(5.148\text{--}5.155)\times10^{19}$ & 1 & 13 \\
Gemma 3-1B & 999,885,952 & 22,000,173,056 & $(1.360\text{--}1.363)\times10^{20}$ & 1 & 10 \\
\bottomrule
\end{tabular}
\end{table}

Modded-NanoGPT \#91 uses the training script of short-track record \#91 of the Modded-NanoGPT speedrun benchmark~\citep{jordan2024moddednanogpt}.
\Cref{fig:training} shows the final validation losses of Modded-NanoGPT \#91, with the same 20 seeds for each method (left), and of Qwen3-0.6B and Gemma 3-1B (right).
\texttt{Polar Switch} reaches final validation losses similar to those of the baselines.

We also measure the numerical stability of all methods by how much the spectral norm of each output exceeds its designed bound.
At each training step, the polar approximation receives 104 matrices in Modded-NanoGPT \#91, 196 in Qwen3-0.6B (7 in each of its 28 layers), and 182 in Gemma 3-1B (7 in each of its 26 layers).
For each matrix that the polar approximation returns, we estimate its spectral norm with Lanczos iteration and compare it with the upper endpoint of the method's final interval in \Cref{tab:circuit-cost}.
Modded-NanoGPT \#91 trains 20 seeds for 1,290 steps, Qwen3-0.6B trains for 22,889 steps, and Gemma 3-1B trains for 41,962 steps.
Thus, each method returns 2,683,200, 4,486,244, and 7,637,084 matrices in the three setups.
\Cref{tab:stability} counts the matrices whose estimate exceeds the bound by 3--30\%, 30--300\%, and more than 300\%.
Only \textsf{GNS} and the circuits with inner depth 6 have such matrices.
\begin{figure}[htbp]
\centering
\begin{minipage}[c]{0.595\linewidth}\centering
\includegraphics{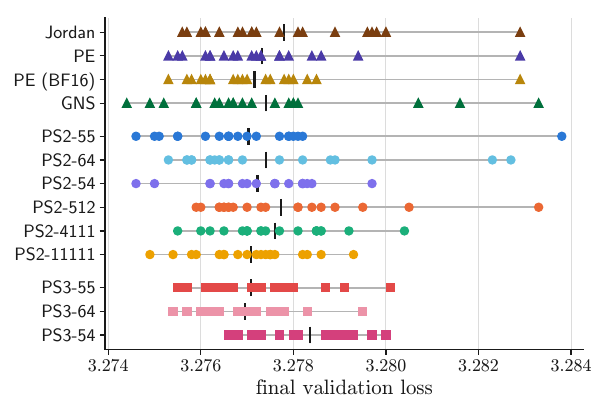}
\end{minipage}\hfill
\begin{minipage}[c]{0.403\linewidth}\centering
\begin{tabular}{@{}lrr@{}}
\toprule
Method & Qwen3-0.6B & Gemma 3-1B \\
\midrule
\textsf{Jordan} & 2.6252 & 2.5534 \\
\textsf{PE} & 2.6255 & 2.5523 \\
\textsf{PE (BF16)} & 2.6260 & 2.5523 \\
\textsf{GNS} & 2.6250 & 2.5515 \\
\midrule
\textsf{PS2-55} & 2.6257 & 2.5520 \\
\textsf{PS2-64} & 2.6258 & 2.5524 \\
\textsf{PS2-54} & 2.6257 & 2.5525 \\
\textsf{PS2-512} & 2.6258 & -- \\
\textsf{PS2-4111} & 2.6257 & -- \\
\textsf{PS2-11111} & 2.6260 & -- \\
\midrule
\textsf{PS3-55} & 2.6255 & 2.5524 \\
\textsf{PS3-64} & 2.6257 & 2.5522 \\
\textsf{PS3-54} & 2.6266 & 2.5533 \\
\bottomrule
\end{tabular}
\end{minipage}
\caption{Final validation loss of Modded-NanoGPT \#91 (left), and of Qwen3-0.6B and Gemma 3-1B (right). On the left, markers show runs with the same 20 seeds, vertical ticks show means, and bars span the minimum and maximum. Triangles mark baselines, circles mark \textsf{PS2}, and squares mark \textsf{PS3}. On the right, all methods use the same seed, and a dash marks methods not trained on Gemma 3-1B.}
\label{fig:training}
\end{figure}

\begin{table}[htbp]
\centering
\caption{Number of matrices whose spectral norm estimate exceeds the bound in column Bound, the upper endpoint in \Cref{tab:circuit-cost} rounded up to four decimals, by 3--30\%, 30--300\%, and more than 300\%, out of 2,683,200 for Modded-NanoGPT \#91, 4,486,244 for Qwen3-0.6B, and 7,637,084 for Gemma 3-1B per method. \textsf{PE (BF16)} uses the bound of \textsf{PE}. A dash marks methods not trained on Gemma 3-1B.}
\label{tab:stability}
\begin{tabular}{@{}lrrrrrrrrrr@{}}
\toprule
 & & \multicolumn{3}{c}{Modded-NanoGPT \#91} & \multicolumn{3}{c}{Qwen3-0.6B} & \multicolumn{3}{c}{Gemma 3-1B} \\
\cmidrule(lr){3-5}\cmidrule(lr){6-8}\cmidrule(lr){9-11}
Method & Bound & 3--30 & 30--300 & $>$300 & 3--30 & 30--300 & $>$300 & 3--30 & 30--300 & $>$300 \\
\midrule
\textsf{Jordan} & 1.4375 & 0 & 0 & 0 & 0 & 0 & 0 & 0 & 0 & 0 \\
\textsf{PE} & 1.1412 & 0 & 0 & 0 & 0 & 0 & 0 & 0 & 0 & 0 \\
\textsf{PE (BF16)} & 1.1412 & 0 & 0 & 0 & 0 & 0 & 0 & 0 & 0 & 0 \\
\textsf{GNS} & 1.2130 & 4 & 0 & 0 & 0 & 0 & 0 & 0 & 0 & 0 \\
\midrule
\textsf{PS2-55} & 1.1063 & 0 & 0 & 0 & 0 & 0 & 0 & 0 & 0 & 0 \\
\textsf{PS2-64} & 1.0813 & 6 & 3 & 0 & 0 & 0 & 0 & 0 & 0 & 0 \\
\textsf{PS2-54} & 1.2014 & 0 & 0 & 0 & 0 & 0 & 0 & 0 & 0 & 0 \\
\textsf{PS2-512} & 1.0911 & 0 & 0 & 0 & 0 & 0 & 0 & -- & -- & -- \\
\textsf{PS2-4111} & 1.0562 & 0 & 0 & 0 & 0 & 0 & 0 & -- & -- & -- \\
\textsf{PS2-11111} & 1.1187 & 0 & 0 & 0 & 0 & 0 & 0 & -- & -- & -- \\
\midrule
\textsf{PS3-55} & 1.0810 & 0 & 0 & 0 & 0 & 0 & 0 & 0 & 0 & 0 \\
\textsf{PS3-64} & 1.0703 & 33 & 5 & 0 & 0 & 0 & 0 & 2 & 0 & 0 \\
\textsf{PS3-54} & 1.1789 & 0 & 0 & 0 & 0 & 0 & 0 & 0 & 0 & 0 \\
\bottomrule
\end{tabular}
\end{table}

In Modded-NanoGPT \#91 training, \textsf{PS2-64} and \textsf{PS3-64} exceeded their bounds by more than 3\% on 9 and 38 of 2,683,200 matrices, and \textsf{PS3-64} also on 2 of 7,637,084 matrices in Gemma 3-1B training (\Cref{tab:stability}), so we recommend inner depths of at most $5$.
A possible reason is that the rounding model \eqref{eq:perturbed-circuit} does not include two effects of the \texttt{float16} stores.
First, the store errors are additive.
Let $E=\texttt{float16}(A)-A$ be the error of storing a matrix $A$ in \texttt{float16}.
Without overflow or underflow, each entry of $E$ is at most $2^{-11}$ (the unit roundoff of \texttt{float16}) times the magnitude of the same entry of $A$, so $\lVert E\rVert_2\le2^{-11}\sqrt n\,\lVert A\rVert_2$.
For $n=2048$ and $\lVert A\rVert_2=1$, this bound is about $0.022$, which is larger than the smallest singular value $0.001$ that the circuits must amplify.
With this additive error, the rounding constraint in \eqref{eq:full-problem} would be nearly impossible to satisfy, so the rounding model \eqref{eq:perturbed-circuit} uses multiplicative factors.

Second, $E$ is in general not a polynomial in $XX^\mathsf{T}$, so the buffers can stop commuting, the singular values can drift, and the singular directions can mix (\Cref{app:drift}).
These errors can grow with the inner depth.
\texttt{Gram Newton--Schulz} also recomputes its Gram matrix from $X$ after two iterations for stability~\citep{zhang2026gram}.
We recommend \textsf{PS2-55} and \textsf{PS3-55}, the bold rows of \Cref{tab:circuit-cost}.

\subsection{Runtime and Approximation Accuracy}
\label{sec:synthetic}

\Cref{fig:campaign-timing-heatmap} shows the runtimes on synthetic single matrices and batches, and the runtime ratios relative to \textsf{PE} and \textsf{GNS}.
For \textsf{PS2-55} and \textsf{PS3-55}, the median runtime ratios are 0.813--0.845 relative to \textsf{PE} on the square shapes and 0.838--0.914 relative to \textsf{GNS} on the rectangular shapes, that is, 15.5--18.7\% and 8.6--16.2\% lower runtime.\footnote{The gain over \textsf{GNS} is larger than the FLOP counts in \Cref{tab:circuit-cost} predict.
One possible reason is that \textsf{GNS} stores the polynomial that multiplies $X$ as a square matrix, so its steps $Q_1\leftarrow Z_1+c_1I$ and $Q_3\leftarrow Z_3+c_3I$ (\Cref{ex:gns}) need an identity matrix.
The official code builds this identity as a separate matrix and adds it in a separate step, while \texttt{Polar Switch} adds $c_\tau$ to the diagonal inside its kernel (\Cref{sec:implementation}).}

\begin{figure}[htbp]
  \centering
  \includegraphics{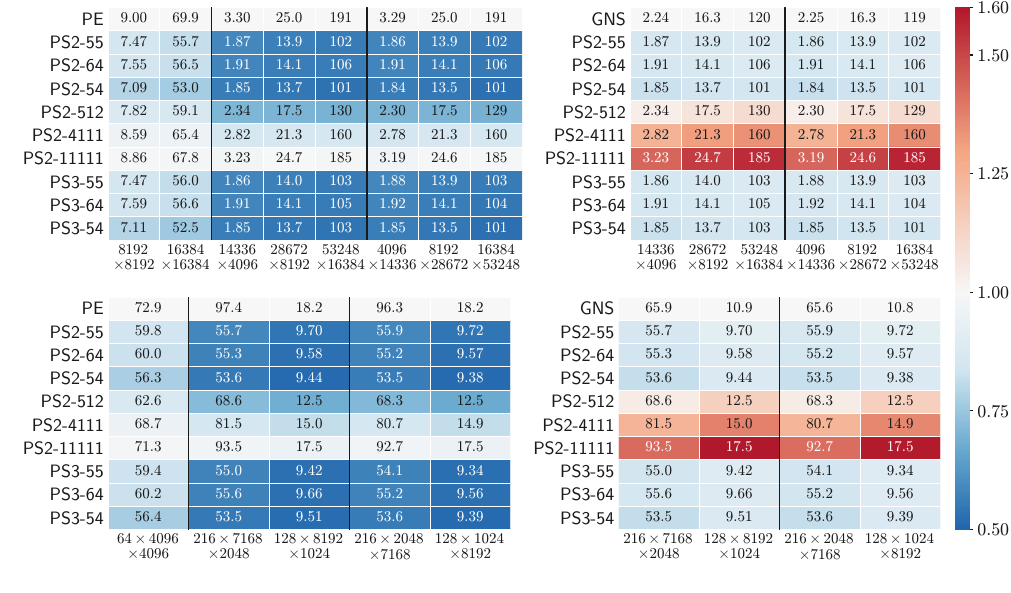}
  \caption{Median runtime in milliseconds for single matrices (top) and batches (bottom).
  Colors show median runtime ratios relative to \textsf{PE} (left) and \textsf{GNS} (right, rectangular shapes only).
  Values below one indicate lower runtime.
  Column labels give tensor dimensions, with the batch size first.}
  \label{fig:campaign-timing-heatmap}
\end{figure}

\begin{figure}[htbp]
  \centering
  \includegraphics{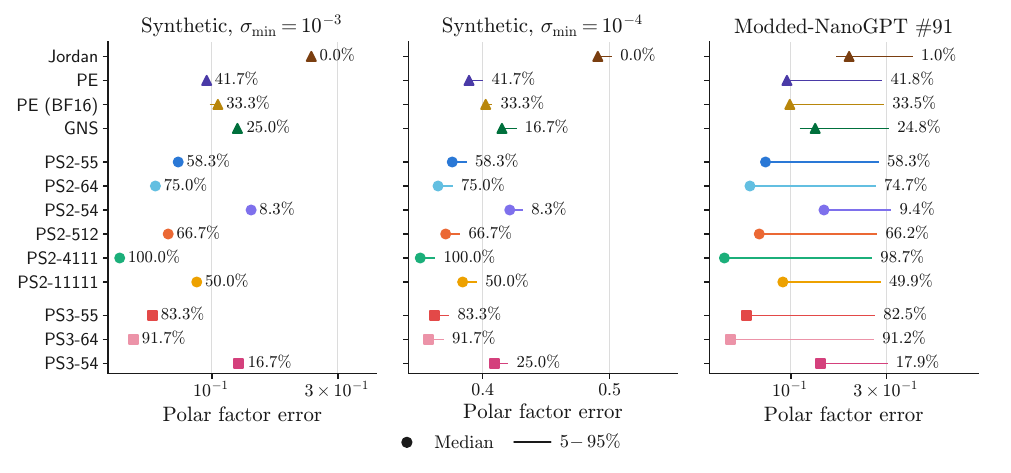}
\caption{Polar factor error \eqref{eq:experimental-polar-error} on synthetic matrices with minimum singular value $\sigma_{\min}=10^{-3}$ (left) and $\sigma_{\min}=10^{-4}$ (middle), 1,500 matrices each, and on 33,592 matrices from one Modded-NanoGPT \#91 run with \textsf{PE} (right).
  Percentages give the fraction of pairs of a matrix and another method of the panel in which the method has the smaller error, and a tie counts as half.}
  \label{fig:campaign-stability}
\end{figure}

Let $X$ be the input of a polar approximation, let $X_T$ be its output, and let $n$ be the smaller dimension of $X$.
Let $U\Sigma V^\mathsf{T}$ be the compact SVD of $\texttt{float64}(X)$, so that $\operatorname{polar}(X)=UV^\mathsf{T}$.
We convert $X_T$ to \texttt{float64} and calculate the \emph{polar factor error} in \texttt{float64}:
\begin{equation}
 \frac{\|X_T-\operatorname{polar}(X)\|_F}{\sqrt n}.
 \label{eq:experimental-polar-error}
\end{equation}
We measure this error in three steps.
First, we generate synthetic matrices.
They are square and rectangular, with random singular vectors and geometrically spaced singular values (\Cref{app:definitions}).
Before rounding to \texttt{float16}, each matrix has unit Frobenius norm and minimum singular value $\sigma_{\min}=10^{-3}$ or $10^{-4}$.
Second, we collect Modded-NanoGPT \#91 matrices from one training run in which NorMuon uses \textsf{PE}, so only \textsf{PE} updates the model.
We randomly sample 323 of its 1,290 steps (25\%) and keep all 104 matrices that \textsf{PE} receives at each sampled step, which gives 33,592 matrices.
Third, we apply all methods to the same matrices and calculate the polar factor error of each output.
In all three panels of \Cref{fig:campaign-stability}, \textsf{PS2-55} and \textsf{PS3-55} have smaller median polar factor errors than \textsf{PE} and \textsf{GNS}.
\Cref{fig:campaign-stability} compares this error across all methods, and \Cref{app:drift} gives two more diagnostics.

\section{Conclusion and Future Work}
\label{sec:discussion}

We introduced \texttt{Polar Switch}, which replaces the quintic iterations in Muon with a few polynomial circuits of high degree.
We select these circuits and their coefficients under a FLOP budget, and our GPU implementations run them in \texttt{float16} on two or three square buffers.
In our experiments, \textsf{PS2-55} and \textsf{PS3-55} reduced runtime by 15.5--18.7\% relative to \texttt{Polar Express} on square matrices and by 8.6--16.2\% relative to \texttt{Gram Newton--Schulz} on rectangular matrices, and they gave final validation losses similar to those of established orthogonalization methods in LLM pretraining.

\paragraph{Limitations.}
Problem \eqref{eq:full-problem} is not convex.
Its constraints must hold for all inputs and all rounding factors.
Thus, our local search and interval certification use much computation.
They give certified feasible circuits, but these circuits are not necessarily globally optimal.
The number of buffer patterns increases quickly with the number $g$ of square buffers and the inner depth $k$.
Thus, for $g=3$, we searched only three fixed schedules.
For $g\ge4$, a full search is not practical.
The rounding model \eqref{eq:perturbed-circuit} is only a scalar model of the \texttt{float16} rounding.
It does not include additive store errors or buffers that stop commuting.
The circuits with inner depth $6$ satisfy its constraint, but they exceeded their bounds in training (\Cref{sec:training}).

\paragraph{Objectives of the design problem.}
Problem \eqref{eq:full-problem} maximizes the final lower endpoint for inputs in $[\ell_0,1]$.
For inputs below $\ell_0$, it only requires that each circuit, also under the rounding model \eqref{eq:perturbed-circuit}, maps $[0,1]$ into $[0,1]$.
Thus, it does not control the amplification of singular values below $\ell_0$, and $\ell_0=0.001$ may not be the best choice for all models.
We could add a slope objective to problem \eqref{eq:full-problem}, so that it both reduces the approximation error on $[\ell_0,1]$ and increases the slope $p'(0)$ of the composition $p$ to amplify very small singular values.
This slope has been maximized for Chebyshev-type polynomials~\citep{grishina2026accelerating}.
Future research could test whether this improves training.
Also, problem \eqref{eq:full-problem} maximizes the lower endpoint $\ell_T$ of the exact polynomials and uses the rounding model \eqref{eq:perturbed-circuit} only in its range constraint.
We could also require $\widetilde p_{C_t}([\ell_{t-1},1];\mathbf{v})\subseteq[\ell_t,1]$ for all $\mathbf{v}\in[s'^{-1},s']^{k_t+2}$, with a second factor $s'\in[1,s]$.
For $s'=1$, this is the first constraint of \eqref{eq:full-problem}.
For $s'>1$, $\ell_T$ would also be a lower bound under the rounding model with factor $s'$, but the largest $\ell_T$ would not increase as $s'$ increases.

\paragraph{Mixed precision storage.}
The kernel for each inner iteration computes $B_{w_\tau}\leftarrow a_\tau B_{i_\tau}B_{j_\tau}+b_\tau B_{r_\tau}+c_\tau I$.
The matrix product uses fast Tensor Cores operating in half precision, with accumulation and additions in \texttt{float32}.
The kernel then rounds the result to \texttt{float16} before storing it.
This rounding happens at every inner iteration and may be the main source of rounding error in our circuits.
We could instead store the result of each inner iteration in \texttt{float32}.
For \textsf{PS2}, two \texttt{float32} square buffers use the same memory in bytes as the four \texttt{float16} square buffers of \textsf{GNS}, without changing the FLOP count.
Reducing this rounding error could allow a smaller safety factor $s$.
\textsf{PS2} could then become more accurate and more numerically stable.

\paragraph{Circuits without diagonal shifts.}
Each inner iteration of \texttt{Polar Switch} adds $c_\tau I$.
\textsf{GNS} keeps this identity term out of the updates that track the Gram matrix, which can make these updates more numerically stable.
We could also adapt problem \eqref{eq:full-problem} to circuits without this term by setting $c_\tau=0$ in \eqref{eq:instruction} and adding the constant term only in the final update, $X\leftarrow B_{w_k}X+cX$.

\paragraph{Approximation error and training quality.}
In our experiments, a smaller polar factor error \eqref{eq:experimental-polar-error} did not give a clearly lower validation loss.
\textsf{Jordan} has the largest polar factor error (\Cref{fig:campaign-stability}), but after centering it gave losses similar to those of the other methods.
\textsf{PE (BF16)} has more than 15 times the diagonal drift and directional mixing of \textsf{PE} (\Cref{fig:drift-mixing}), but it also gave similar losses (\Cref{fig:training}).
In GPT-2 training, more iterations or an exact SVD also did not decrease the final loss~\citep{amsel2026polar,huang2026how}.
Thus, other properties of the update may determine training quality.
Candidates are the update norm (footnote~\ref{fn:jordan-centering}), the singular directions that the update amplifies, and postprocessing, such as in NorMuon~\citep{li2026normuon}.
Also, all methods use the same learning rates and weight decay, which we did not tune for each method, and the best values may differ between orthogonalization methods.
Further experiments could test these effects.

\paragraph{Integration with other Muon variants.}
Future work could replace the quintic iterations of MuonBP~\citep{khaled2026muonbp}, Hierarchical Muon~\citep{tang2026hierarchical}, and Dion3~\citep{amsel2026dion3} with \texttt{Polar Switch} circuits.
\texttt{Polar Switch} could also refresh the cached transform in CacheMuon~\citep{dev2026cachemuon}.
These combinations could reduce training time.

\section*{Acknowledgments}
This work was supported in part by the Wharton AI Fund. GPT-6 Astra and GPT-5.6 Sol were used to assist with theory and mathematical proofs, experimental design, code development, and data analysis.

\bibliographystyle{unsrtnat}
\bibliography{references}

\clearpage
\appendix
\crefalias{section}{appendix}
\crefalias{subsection}{subappendix}
\counterwithin{algorithm}{section}
\section{Solving the Polar Switch Optimization Problem}
\label{app:coefficients}

This appendix treats problem \eqref{eq:full-problem} of \Cref{sec:stage-solve}.
As in \texttt{Polar Express}, we use the initial interval $[\ell_0,1]=[0.001,1]$ after Frobenius normalization~\citep[Section~3.3]{amsel2026polar}.
In the GPT-2 Small experiments of \texttt{Polar Express}, validation loss was less sensitive to removing or reversing singular directions below $0.001\sigma_{\max}$ than below $0.01\sigma_{\max}$, where $\sigma_{\max}$ is the largest singular value of the momentum matrix~\citep[Appendix~H.1, Figure~9]{amsel2026polar}.
For $X\in\mathbb{R}^{n\times m}$ with $n\le m$, $\alpha=m/n$, and singular values in $[\ell_0,1]$, the problem uses the circuits of inner depth $k$ on $g\ge2$ square buffers with $w_0=1$ (\Cref{def:circuit,def:class}), their polynomials $p_C$ from \eqref{eq:scalar-circuit}, and their perturbed polynomials $\widetilde p_C$ from \eqref{eq:perturbed-circuit} with $s=1.005$:
\[
\begin{aligned}
 \mathcal C_k^g&=\Bigl\{C=(w_\tau,i_\tau,j_\tau,r_\tau,a_\tau,b_\tau,c_\tau)_{\tau=1}^k:\
   w_\tau\in\{1,\ldots,g\},\ i_\tau,j_\tau,r_\tau\in\{w_0,w_1,\ldots,w_{\tau-1}\},\\
 &\hphantom{{}=\Bigl\{C=(w_\tau,i_\tau,j_\tau,r_\tau,a_\tau,b_\tau,c_\tau)_{\tau=1}^k:\ }
   w_\tau\notin\{i_\tau,j_\tau\},\ a_\tau,b_\tau,c_\tau\in\mathbb{R},\ \tau=1,\ldots,k\Bigr\},\\
 \beta_1^{(0)}&=x^2,\qquad \widetilde\beta_1^{(0)}=v_0x^2,\\
 \beta_{w_\tau}^{(\tau)}&=a_\tau\beta_{i_\tau}^{(\tau-1)}\beta_{j_\tau}^{(\tau-1)}+b_\tau\beta_{r_\tau}^{(\tau-1)}+c_\tau,\\
 \widetilde\beta_{w_\tau}^{(\tau)}&=v_\tau\left(a_\tau\widetilde\beta_{i_\tau}^{(\tau-1)}\widetilde\beta_{j_\tau}^{(\tau-1)}+b_\tau\widetilde\beta_{r_\tau}^{(\tau-1)}+c_\tau\right),\\
 \beta_i^{(\tau)}&=\beta_i^{(\tau-1)},\qquad \widetilde\beta_i^{(\tau)}=\widetilde\beta_i^{(\tau-1)}\quad(i\ne w_\tau),\qquad \tau=1,\ldots,k,\\
 p_C(x)&=x\beta_{w_k}^{(k)},\\
 \widetilde p_C(x;\mathbf{v})&=v_{k+1}x\widetilde\beta_{w_k}^{(k)},\qquad \mathbf{v}=(v_0,\ldots,v_{k+1})\in[s^{-1},s]^{k+2}.
\end{aligned}
\]
An outer iteration with $C\in\mathcal C_k^g$ costs $(3\alpha+k)n^3$ leading-order FLOPs (\Cref{prop:cost}).
For a budget $f_{\max}n^3$ and a largest inner depth $k_{\max}$, problem \eqref{eq:full-problem} is
\[
\begin{aligned}
 &\max_{\substack{T\ge1,\ 1\le k_1,\ldots,k_T\le k_{\max}\\ \sum_{t=1}^T(3\alpha+k_t)\le f_{\max}}}\
 \max_{\substack{C_t\in\mathcal C_{k_t}^g,\ \ell_t\in[0,1]\\ t=1,\ldots,T}}\
 \ell_T\\[4pt]
 &\qquad\text{s.t.}\quad
 p_{C_t}([\ell_{t-1},1])\subseteq[\ell_t,1],\quad
 0\le\widetilde p_{C_t}(x;\mathbf{v})\le1,\quad x\in[0,1],\ \mathbf{v}\in[s^{-1},s]^{k_t+2},\ t=1,\ldots,T.
\end{aligned}
\]
For a feasible point of \eqref{eq:full-problem} with $\ell_T>0$, centering the output interval $[\ell_T,1]$ at one gives, in exact arithmetic, a worst-case error of at most $(1-\ell_T)/(1+\ell_T)$ in \eqref{eq:matrix-budget-problem} (\Cref{lem:centering} and \Cref{thm:relation}).

\Cref{app:greedy} reduces \eqref{eq:full-problem} to one problem per outer iteration, bounds the error after centering, and chooses the schedule, and \Cref{app:minimax-solve} solves the problem of one outer iteration numerically, rescales the circuits for \texttt{float16}, and lists all \texttt{Polar Switch} methods.

\subsection{Greedy Circuits and Optimal Schedules}
\label{app:greedy}
\label{app:search}

In problem \eqref{eq:full-problem}, the constraints on $C_t$ are $p_{C_t}([\ell_{t-1},1])\subseteq[\ell_t,1]$ and the rounding constraint, so they depend on the earlier outer iterations only through $\ell_{t-1}$.
For one outer iteration, this gives the following problem.

\begin{definition}[Per-iteration problem]
\label{def:per-iteration}
For $k\ge1$ and $\ell\in[0,1]$,
\begin{equation}
 F_k^g(\ell)=\sup_{C\in\mathcal C_k^g}\ \min_{x\in[\ell,1]}p_C(x)
 \qquad\text{subject to}\quad 0\le\widetilde p_C(y;\mathbf{v})\le1\quad(y\in[0,1],\ \mathbf{v}\in[s^{-1},s]^{k+2}).
 \label{eq:design}
\end{equation}
For a schedule $(k_1,\ldots,k_T)$, let $\ell_0^\star=\ell_0$ and $\ell_t^\star=F_{k_t}^g(\ell_{t-1}^\star)$, $t=1,\ldots,T$.
\end{definition}

The supremum is over a nonempty set, because every circuit with $a_\tau=b_\tau=c_\tau=0$ satisfies the constraint of \eqref{eq:design}.
Since $\mathbf{v}=(1,\ldots,1)$ gives $0\le p_C\le1$ on $[0,1]$, $F_k^g(\ell)\in[0,1]$.

\begin{lemma}[Monotonicity]
\label{lem:monotone}
If $0\le\ell\le\bar\ell\le1$, then $F_k^g(\ell)\le F_k^g(\bar\ell)$.
\end{lemma}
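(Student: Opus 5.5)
The monotonicity follows directly from the structure of \eqref{eq:design}: the feasible set of circuits does not depend on $\ell$, and only the objective does. So I will compare the two objectives circuit by circuit and then pass to the supremum.

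Fix any circuit $C\in\mathcal C_k^g$ that satisfies the rounding constraint $0\le\widetilde p_C(y;\mathbf{v})\le1$ for all $y\in[0,1]$ and $\mathbf{v}\in[s^{-1},s]^{k+2}$. This constraint involves neither $\ell$ nor $\bar\ell$. Hence $C$ is feasible for the problem defining $F_k^g(\ell)$ if and only if it is feasible for the problem defining $F_k^g(\bar\ell)$.

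Since $\ell\le\bar\ell\le1$, we have $[\bar\ell,1]\subseteq[\ell,1]$. The function $p_C$ is a polynomial, hence continuous, so both minima over these compact intervals exist. A minimum over a smaller set is at least the minimum over a larger set, which gives
\[
 \min_{x\in[\ell,1]}p_C(x)\le\min_{x\in[\bar\ell,1]}p_C(x)\le F_k^g(\bar\ell).
\]
The second inequality holds because $C$ is feasible for the problem defining $F_k^g(\bar\ell)$. Taking the supremum of the left-hand side over all feasible $C$ yields $F_k^g(\ell)\le F_k^g(\bar\ell)$.

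There is no real obstacle here. The only points to check are these:
\begin{itemize}
\item The feasible set is nonempty, as noted after \Cref{def:per-iteration}: the zero circuit is feasible, so both suprema are over the same nonempty set.
\item The minima are attained, by continuity of $p_C$ on a compact interval.
\end{itemize}
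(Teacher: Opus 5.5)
Your proof is correct and matches the paper's argument: the rounding constraint in \eqref{eq:design} does not depend on $\ell$, so both suprema range over the same circuits. The inclusion $[\bar\ell,1]\subseteq[\ell,1]$ then gives the circuit-by-circuit inequality of the objectives, and taking suprema finishes the proof; your remarks on nonemptiness and on attainment of the minima are harmless details that the paper leaves implicit.
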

\begin{proof}
Let $0\le\ell\le\bar\ell\le1$, and let $C$ satisfy the constraint of \eqref{eq:design}.
Since $[\bar\ell,1]\subseteq[\ell,1]$, we have $\min_{x\in[\bar\ell,1]}p_C(x)\ge\min_{x\in[\ell,1]}p_C(x)$.
The constraint of \eqref{eq:design} does not depend on $\ell$, so both suprema in \eqref{eq:design} range over the same circuits.
Taking the supremum over these circuits gives $F_k^g(\bar\ell)\ge F_k^g(\ell)$.
\end{proof}

\Cref{thm:greedy} uses circuits that attain the supremum in \eqref{eq:design}.
The next lemma shows that such circuits exist.

\begin{lemma}[Attainment]
\label{lem:attain}
If $s>1$, then for $k\ge1$ and $\ell\in[0,1]$, the supremum in \eqref{eq:design} is attained.
\end{lemma}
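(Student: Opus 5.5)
The plan is a compactness argument in which $s>1$ supplies the missing compactness. There are finitely many buffer patterns $(w_\tau,i_\tau,j_\tau,r_\tau)_{\tau=1}^k$, so it suffices to fix one pattern and show that the supremum over coefficients $(a_\tau,b_\tau,c_\tau)\in\mathbb{R}^{3k}$ is attained. The maximum over patterns of the attained values is then $F_k^g(\ell)$.

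For a fixed pattern, the objective $\min_{x\in[\ell,1]}p_C(x)$ is continuous in the coefficients, since $p_C$ is a polynomial whose coefficients are polynomial in $(a_\tau,b_\tau,c_\tau)$. The feasible set is closed, being an intersection over $(y,\mathbf v)\in[0,1]\times[s^{-1},s]^{k+2}$ of closed conditions. The only issue is boundedness. Coefficients can genuinely be unbounded, because of the rescaling invariance in the footnote to \Cref{def:class} (multiply buffer $\tau$ by $\kappa$ and divide its later uses by $\kappa$). So I would take a maximizing sequence $C^{(m)}$, first normalize it with this invariance, and then show the normalized coefficients stay bounded. The normalization fixes each intermediate buffer polynomial $\beta_{w_\tau}^{(\tau)}$, $\tau<k$, to have sup norm $1$ on $[0,1]$, or to be identically zero. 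The invariance does not change $p_C$, and I would check that it also preserves the rounding constraint: $\widetilde p_C$ is transformed covariantly because the $v$'s enter multiplicatively. A buffer that is identically zero can be replaced by the zero circuit branch without changing feasibility or the objective.

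The key step, and the main obstacle, is to show that after normalization the individual terms $a_\tau\beta_{i_\tau}\beta_{j_\tau}$, $b_\tau\beta_{r_\tau}$ and $c_\tau$ are uniformly bounded on $[0,1]$. This is where $s>1$ enters: it rules out near-cancellation between large terms. The worry is that the final output $v_{k+1}x\widetilde\beta_{w_k}^{(k)}$ might be a small difference of large terms. I would argue backward from $\tau=k$. For the last inner iteration, two choices of $\mathbf v$ differ only in the factors feeding $\widetilde\beta_{i_k}$ (or $\widetilde\beta_{r_k}$) by a ratio up to $s^2$. Both outputs must lie in $[0,1]$, so their difference, which is roughly $(s^{\pm1}-1)$ times the individual term, is bounded by $1$ in absolute value. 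For example, varying $v_0$ alone scales every $x^2$-dependence and can be used first.

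This bound gives $\sup_{[0,1]}|a_k\beta_{i_k}\beta_{j_k}|\lesssim 1/(s-1)$, and similarly for $b_k\beta_{r_k}$ and hence $c_k$. Since the $\beta$'s have unit sup norm, a polynomial of bounded degree with unit sup norm is bounded below on a set of positive measure. In finite dimensions this yields bounds on $|a_k|$ and $|b_k|$. For this step I would try first to use the equivalence of norms on the space of polynomials of degree at most $2^{k+1}$.

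I would then propagate these bounds inductively to earlier $\tau$, using that each earlier buffer's perturbation propagates through bounded later coefficients. The delicate part is making the perturbation argument rigorous when the same buffer enters several later iterations, so that the perturbed effects could partly cancel. To handle this, I would perturb only the single factor $v_\tau$ while keeping all others equal to $1$, and use the fact that $\mathbf v\mapsto\widetilde p_C$ is polynomial in $v_\tau$ with bounded values on an interval of positive length. Bounded values on such an interval force its coefficients in $v_\tau$ to be bounded, again by norm equivalence.

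Once the normalized coefficients are bounded, I extract a convergent subsequence. Its limit $C^\star$ uses the same pattern, satisfies the closed rounding constraint, and attains the supremum by continuity of the objective.
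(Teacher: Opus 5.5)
Your outer frame matches the paper: fix a pattern, use continuity of the objective and closedness of the feasible set, use the rescaling invariance (which preserves $\widetilde p_C$ exactly), and conclude by compactness once the coefficients are bounded. The gap is the boundedness step, which is the whole difficulty. Two choices cause it.

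First, you normalize the \emph{unperturbed} values $\beta_{w_\tau}^{(\tau)}$ on $[0,1]$. Second, you try to bound the coefficients of each $\tau<k$ by propagating backward from the output constraint. Bounded later coefficients only give \emph{upper} bounds on how buffer $\tau$ influences the output. To conclude that buffer $\tau$ is controlled, you would need a \emph{lower} bound on that influence, and there is none.

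Here is a concrete counterexample on $g=3$ buffers. Take $B_2\leftarrow I$, then $B_3\leftarrow mB_1B_2+(1-m)B_1$, then a final $B_2\leftarrow a_3B_1^2+b_3B_1+c_3I$ that never reads $B_3$. Here $\beta_2=1$ and $\beta_3=x^2$ already have unit sup norm, so your normalization does nothing. The polynomials $p_C$ and $\widetilde p_C$ do not depend on $m$, yet the coefficients are unbounded as $m\to\infty$. Any maximizing sequence for this pattern can be modified this way.

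The problem is not only dead code. Suppose the final iteration instead reads $B_3$ as its addend with $b_k=1/m$. The circuits stay feasible, and $b_k\widetilde\beta_3=v_2v_0x^2(v_1-1+1/m)$ converges to a nonzero term of $\widetilde p_C$. Your normalized coefficients still blow up. Your sketch uses only feasibility and normalization, so it must fail here.

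The root cause is that $\widetilde\beta_3=v_2v_0x^2\bigl(m(v_1-1)+1\bigr)$ is huge while $\beta_3$ is not. Terms that are linearly dependent as functions of $x$ alone can cancel at $\mathbf v=(1,\ldots,1)$ and nowhere else.

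For the same reason, your treatment of zero buffers is wrong. Having $\beta_{w_\tau}^{(\tau)}\equiv0$ does not imply $\widetilde\beta_{w_\tau}^{(\tau)}\equiv0$. For example, $a_\tau=0$, $b_\tau=1$, $c_\tau=-1$ with the addend holding $v_1$ gives $v_\tau(v_1-1)$. So zeroing its later uses changes $\widetilde p_C$ and can break feasibility.

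Even at $\tau=k$, a slice in the latest $v_\theta$ alone need not separate the terms. Take $r_k=i_k$ with $B_{j_k}$ holding a constant: then $a_k\beta_{j_k}+b_k$ can vanish with $a_k,b_k$ large. Slices in earlier factors pass through intermediate iterations whose coefficients a backward induction has not yet bounded.

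The missing idea is to normalize in the \emph{perturbed} norm and argue forward, one inner iteration at a time. Set $\Omega=[0,1]\times[s^{-1},s]^{k+2}$ and $\|f\|_\Omega=\max_\Omega|f|$. This is a norm on polynomials in $(x,\mathbf v)$ precisely because $s>1$ gives $\Omega$ a nonempty interior.

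\begin{enumerate}
\item[(i)] Rescale so that $\|\widetilde\beta_{w_\theta}^{(\theta)}\|_\Omega=1$ for $\theta<k$. Zero the later uses only when $\widetilde\beta_{w_\theta}^{(\theta)}\equiv0$ as a polynomial in $(x,\mathbf v)$.
\item[(ii)] Each stored value is $v_\theta$ times a polynomial free of $v_\theta,\ldots,v_{k+1}$. Comparing degrees in the latest $v_\theta$ shows that $f_if_j$, $f_r$, and $1$ are linearly independent as polynomials in $(x,\mathbf v)$, even when they are dependent in $x$ alone.
\item[(iii)] Compactness of the normalized inputs then gives a uniform $K$ with $\max(|a_\tau|,|b_\tau|,|c_\tau|)\le K\|\gamma_\tau\|_\Omega$, where $\widetilde\beta_{w_\tau}^{(\tau)}=v_\tau\gamma_\tau$.
\item[(iv)] For $\tau<k$, the normalization alone gives $\|\gamma_\tau\|_\Omega\le s$, so the output constraint is never needed. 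The rounding constraint of \eqref{eq:design} is used only at $\tau=k$, through $\|x\widetilde\beta_{w_k}^{(k)}\|_\Omega\le s$ and equivalence of norms.
\end{enumerate}
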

\begin{proof}
There are finitely many buffer patterns $(w_\tau,i_\tau,j_\tau,r_\tau)_{\tau=1}^k$, and the supremum in \eqref{eq:design} is the largest of the suprema over these patterns.
Thus, we fix one pattern.
Then $C$ is determined by its coefficients $a_\tau,b_\tau,c_\tau$, $\tau=1,\ldots,k$, and it suffices to show that the supremum over these coefficients is attained.
Step 1 shows linear independence, Step 2 turns it into a bound on the coefficients, Step 3 normalizes the stored values, and Step 4 concludes by compactness.

Setup.
For $\theta=1,\ldots,k$, inner iteration $\theta$ stores $\widetilde\beta_{w_\theta}^{(\theta)}$ in \eqref{eq:perturbed-circuit}.
We say that inner iteration $0$ stores $\widetilde\beta_{w_0}^{(0)}=v_0x^2$.
For inner iteration $\tau$, let $\theta_i=\max\{\theta\in\{0,\ldots,\tau-1\}:w_\theta=i_\tau\}$, and define $\theta_j$ and $\theta_r$ in the same way with $j_\tau$ and $r_\tau$.
Then \eqref{eq:perturbed-circuit} reads
\[
 \widetilde\beta_{w_\tau}^{(\tau)}=v_\tau\gamma_\tau,
 \qquad
 \gamma_\tau=a_\tau\widetilde\beta_{w_{\theta_i}}^{(\theta_i)}\widetilde\beta_{w_{\theta_j}}^{(\theta_j)}+b_\tau\widetilde\beta_{w_{\theta_r}}^{(\theta_r)}+c_\tau .
\]
We write $\mathbb{R}[x,v_0,\ldots,v_\theta]$ for the polynomials in $x,v_0,\ldots,v_\theta$ with real coefficients, and $\mathbb{R}[x]$ when $\theta=-1$.
By induction on $\tau$, $\widetilde\beta_{w_\tau}^{(\tau)}\in\mathbb{R}[x,v_0,\ldots,v_\tau]$ has total degree at most $2^{\tau+2}-1$, and $\gamma_\tau\in\mathbb{R}[x,v_0,\ldots,v_{\tau-1}]$.
Let
\[
 \mathcal Q=\left\{f\in\mathbb{R}[x,\mathbf{v}]:\ \deg f\le2^{k+2}\right\},
 \qquad
 \Omega=[0,1]\times[s^{-1},s]^{k+2},
 \qquad
 \|f\|_\Omega=\max_{(x,\mathbf{v})\in\Omega}|f(x,\mathbf{v})|
\]
for each polynomial $f$ in $x$ and $\mathbf{v}$.
Since $s>1$, the set $\Omega$ has a nonempty interior.
A polynomial that is zero on a set with a nonempty interior is the zero polynomial, so $\|f\|_\Omega=0$ implies $f=0$.
Thus, $\|\cdot\|_\Omega$ is a norm on every finite-dimensional space of polynomials in $x$ and $\mathbf{v}$, such as $\mathcal Q$.
For $\theta=0,\ldots,k$, let
\[
 \mathcal S_\theta=\left\{v_\theta q\in\mathcal Q:\ q\in\mathbb{R}[x,v_0,\ldots,v_{\theta-1}],\ 1\le\|v_\theta q\|_\Omega\le s\right\}.
\]
Each $\mathcal S_\theta$ is the intersection of a linear subspace of $\mathcal Q$ with the closed and bounded set $\{f\in\mathcal Q:1\le\|f\|_\Omega\le s\}$.
Since $\mathcal Q$ has finite dimension, $\mathcal S_\theta$ is compact.

Step 1.
For $\rho\in\{i,j,r\}$, let $f_\rho=v_{\theta_\rho}q_\rho\ne0$ with $q_\rho\in\mathbb{R}[x,v_0,\ldots,v_{\theta_\rho-1}]$.
Assume that $f_\rho=f_{\rho'}$ whenever $\rho,\rho'\in\{i,j,r\}$ satisfy $\theta_\rho=\theta_{\rho'}$.
We show that the polynomials $f_if_j$, $f_r$, and $1$ are linearly independent, that is,
\[
 af_if_j+bf_r+c=0
 \quad\Longrightarrow\quad
 a=b=c=0 .
\]
Let $\theta=\max(\theta_i,\theta_j,\theta_r)$, and compare the coefficients of the powers of $v_\theta$.
If $\theta_r<\theta$, then only $af_if_j$ has positive degree in $v_\theta$, so $a=0$, and $bv_{\theta_r}q_r+c=0$ gives $b=c=0$.
If $\theta_r=\theta>\theta_i,\theta_j$, then only $bf_r$ has positive degree in $v_\theta$, so $b=0$, and $af_if_j+c=0$ gives $a=c=0$, because $f_if_j$ has positive degree in $v_{\max(\theta_i,\theta_j)}$.
If $\theta_r=\theta_i=\theta>\theta_j$, then $f_r=f_i$, and the coefficient of $v_\theta$ is $q_r(af_j+b)$, so $af_j+b=0$, which gives $a=b=0$ and then $c=0$.
The case $\theta_r=\theta_j=\theta>\theta_i$ follows by exchanging $i$ and $j$.
If $\theta_i=\theta_j=\theta_r=\theta$, then $f_i=f_j=f_r$, and the three terms have degrees $2$, $1$, and $0$ in $v_\theta$, so $a=b=c=0$.

Step 2.
We turn Step 1 into a bound on the coefficients.
For $f_i,f_j,f_r\in\mathcal Q$ and $a,b,c\in\mathbb{R}$, the function $\Phi(f_i,f_j,f_r,a,b,c)=\|af_if_j+bf_r+c\|_\Omega$ is continuous.
Let
\[
 \mathcal D_\tau=\left\{(f_i,f_j,f_r,a,b,c):\ f_\rho\in\mathcal S_{\theta_\rho},\ f_\rho=f_{\rho'}\text{ if }\theta_\rho=\theta_{\rho'},\ \max(|a|,|b|,|c|)=1\right\}.
\]
This set is compact, because each $\mathcal S_\theta$ is compact, the conditions $f_\rho=f_{\rho'}$ define a closed set, and $\{(a,b,c):\max(|a|,|b|,|c|)=1\}$ is compact.
On $\mathcal D_\tau$, each $f_\rho$ is nonzero and has the form of Step 1, and $(a,b,c)\ne0$.
By Step 1, $af_if_j+bf_r+c$ is a nonzero polynomial, so $\Phi>0$ on $\mathcal D_\tau$.
Thus, $\Phi$ has a positive minimum $\delta_\tau$ on $\mathcal D_\tau$.
For $(a,b,c)\ne0$, dividing $a,b,c$ by $\max(|a|,|b|,|c|)$ gives $\Phi(f_i,f_j,f_r,a,b,c)\ge\max(|a|,|b|,|c|)\,\delta_\tau$.
Thus, with $K_\tau=1/\delta_\tau$,
\[
 \max(|a|,|b|,|c|)\le K_\tau\,\|af_if_j+bf_r+c\|_\Omega
\]
for all $a,b,c\in\mathbb{R}$ and all $f_\rho\in\mathcal S_{\theta_\rho}$ with $f_\rho=f_{\rho'}$ if $\theta_\rho=\theta_{\rho'}$.
The same argument, with Step 1 for $a=0$ or $b=0$, gives such a bound for $bf_r+c$, $af_if_j+c$, and $c$, which occur when a factor or the addend is zero (Step 3).
We take $K$ as the largest of these finitely many constants, over all $\tau$.

Step 3.
We normalize the stored values without changing $p_C$ or $\widetilde p_C$.
Fix $1\le\theta<k$ and $\kappa\ne0$.
Multiply $a_\theta,b_\theta,c_\theta$ by $\kappa$.
For each $\tau>\theta$, replace $a_\tau$ by $\kappa^{-e_\tau}a_\tau$, where $e_\tau\in\{0,1,2\}$ is the number of $\rho\in\{i,j\}$ with $\theta_\rho=\theta$, and replace $b_\tau$ by $b_\tau/\kappa$ if $\theta_r=\theta$.
By induction on $\tau$, the new value stored by inner iteration $\theta$ is $\kappa\widetilde\beta_{w_\theta}^{(\theta)}$, and all other stored values do not change.
Indeed, for $\tau=\theta$ the values that it reads do not change, so the new value is $v_\theta\kappa\gamma_\theta$.
For $\tau>\theta$, each factor or addend equal to the value stored by inner iteration $\theta$ is multiplied by $\kappa$, and the new coefficients cancel this factor.
Since $\theta<k$, the value $\widetilde\beta_{w_k}^{(k)}$ does not change, so $\widetilde p_C=v_{k+1}x\widetilde\beta_{w_k}^{(k)}$ and $p_C$, its value at $\mathbf{v}=(1,\ldots,1)$, do not change.
In particular, the rescaled circuit satisfies the constraint of \eqref{eq:design}.

We apply this rescaling for $\theta=1,\ldots,k-1$ in increasing order, with $\kappa=1/\|\widetilde\beta_{w_\theta}^{(\theta)}\|_\Omega$ when $\widetilde\beta_{w_\theta}^{(\theta)}\ne0$.
Each rescaling changes only its own stored value, so afterwards $\|\widetilde\beta_{w_\theta}^{(\theta)}\|_\Omega=1$ for each $1\le\theta<k$ with $\widetilde\beta_{w_\theta}^{(\theta)}\ne0$.
If $\widetilde\beta_{w_\theta}^{(\theta)}=0$, we set to zero each $a_\tau$ with $\theta_i=\theta$ or $\theta_j=\theta$ and each $b_\tau$ with $\theta_r=\theta$.
These coefficients multiply the zero polynomial, so no stored value changes.
Since $\|\widetilde\beta_{w_0}^{(0)}\|_\Omega=\|v_0x^2\|_\Omega=s$, we have $\widetilde\beta_{w_{\theta_\rho}}^{(\theta_\rho)}\in\mathcal S_{\theta_\rho}$ for each $\tau$ and $\rho\in\{i,j,r\}$ with $\widetilde\beta_{w_{\theta_\rho}}^{(\theta_\rho)}\ne0$.
Step 2 then gives $\max(|a_\tau|,|b_\tau|,|c_\tau|)\le K\|\gamma_\tau\|_\Omega$ for $\tau=1,\ldots,k$.

It remains to bound $\|\gamma_\tau\|_\Omega$.
On $\Omega$, we have $v_\tau\ge s^{-1}$, so $|\gamma_\tau|\le s|\widetilde\beta_{w_\tau}^{(\tau)}|$ and $\|\gamma_\tau\|_\Omega\le s\|\widetilde\beta_{w_\tau}^{(\tau)}\|_\Omega$.
For $\tau<k$, $\|\widetilde\beta_{w_\tau}^{(\tau)}\|_\Omega\le1$.
For $\tau=k$, the constraint of \eqref{eq:design} gives $0\le v_{k+1}x\widetilde\beta_{w_k}^{(k)}\le1$ on $\Omega$, and $v_{k+1}\ge s^{-1}$, so $\|x\widetilde\beta_{w_k}^{(k)}\|_\Omega\le s$.
The map $f\mapsto\|xf\|_\Omega$ is also a norm on $\mathcal Q$, because $xf=0$ on $\Omega$ implies $f=0$.
All norms on the finite-dimensional space $\mathcal Q$ are equivalent, so there is $K'>0$ with $\|f\|_\Omega\le K'\|xf\|_\Omega$ for all $f\in\mathcal Q$.
Thus, $\|\widetilde\beta_{w_k}^{(k)}\|_\Omega\le K's$, and
\[
 \max(|a_\tau|,|b_\tau|,|c_\tau|)\le R,
 \qquad R=Ks\max(1,K's),
 \qquad \tau=1,\ldots,k.
\]

Step 4.
Let $\mathcal F\subseteq\mathbb{R}^{3k}$ be the set of coefficients $(a_\tau,b_\tau,c_\tau)_{\tau=1}^k$ that satisfy the constraint of \eqref{eq:design}.
For each $(x,\mathbf{v})\in\Omega$, $\widetilde p_C(x;\mathbf{v})$ is a polynomial in the coefficients, so the coefficients with $0\le\widetilde p_C(x;\mathbf{v})\le1$ form a closed set.
The set $\mathcal F$ is the intersection of these closed sets, so it is closed.
Thus, $\mathcal F\cap[-R,R]^{3k}$ is compact, and it contains the zero coefficients.
Since $p_C(x)$ is jointly continuous in $x$ and the coefficients, and $[\ell,1]$ is compact, $\min_{x\in[\ell,1]}p_C(x)$ is continuous in the coefficients.
Thus, it attains its maximum on $\mathcal F\cap[-R,R]^{3k}$.
By Step 3, every point of $\mathcal F$ has the same $p_C$ as a point of $\mathcal F\cap[-R,R]^{3k}$.
Hence, this maximum is the supremum over $\mathcal F$, and the supremum in \eqref{eq:design} is attained.
\end{proof}

The next theorem shows that, for a fixed schedule, we can solve \eqref{eq:full-problem} one outer iteration after another, as \texttt{Polar Express} shows for quintic compositions~\citep{amsel2026polar}.
No other feasible circuits for this schedule give larger lower endpoints.

\begin{theorem}[Greedy optimality]
\label{thm:greedy}
Let $(k_1,\ldots,k_T)$ be a schedule, and let $C_t\in\mathcal C_{k_t}^g$ and $\ell_t\in[0,1]$, $t=1,\ldots,T$, satisfy the constraints of \eqref{eq:full-problem}.
Then the following hold.
\begin{enumerate}
 \item[(i)] $(p_{C_t}\circ\cdots\circ p_{C_1})([\ell_0,1])\subseteq[\ell_t,1]$ for $t=1,\ldots,T$.
 \item[(ii)] $\ell_t\le\ell_t^\star$ for $t=1,\ldots,T$ (\Cref{def:per-iteration}).
 \item[(iii)] If $C_t^\star\in\mathcal C_{k_t}^g$ satisfies the constraint of \eqref{eq:design} and $\min_{x\in[\ell_{t-1}^\star,1]}p_{C_t^\star}(x)=\ell_t^\star$ for $t=1,\ldots,T$, then $C_t^\star$ and $\ell_t^\star$, $t=1,\ldots,T$, satisfy the constraints of \eqref{eq:full-problem}.
\end{enumerate}
Since $s>1$, circuits as in (iii) exist for this schedule (\Cref{lem:attain}), and they attain the optimal value $\ell_T^\star$ of \eqref{eq:full-problem} for this schedule.
We call circuits as in (iii) \emph{greedy}.
\end{theorem}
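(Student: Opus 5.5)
The plan is to prove the three parts in order, each by induction on $t$, using only the constraints of \eqref{eq:full-problem}, \Cref{def:per-iteration}, and \Cref{lem:monotone}.

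For (i), induct on $t$. The case $t=1$ is the first constraint $p_{C_1}([\ell_0,1])\subseteq[\ell_1,1]$. Now assume $(p_{C_{t-1}}\circ\cdots\circ p_{C_1})([\ell_0,1])\subseteq[\ell_{t-1},1]$. Applying $p_{C_t}$ and the constraint $p_{C_t}([\ell_{t-1},1])\subseteq[\ell_t,1]$ gives the claim for $t$. Nothing about signs is needed, since the constraint already controls the whole image of the interval.

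For (ii), induct on $t$, with $\ell_0=\ell_0^\star$ as the base. Suppose $\ell_{t-1}\le\ell_{t-1}^\star$. The circuit $C_t$ satisfies the rounding constraint of \eqref{eq:full-problem}, and this constraint is exactly the constraint of \eqref{eq:design}. So $C_t$ is admissible in the supremum defining $F_{k_t}^g(\ell_{t-1})$. The inclusion $p_{C_t}([\ell_{t-1},1])\subseteq[\ell_t,1]$ gives $\ell_t\le\min_{x\in[\ell_{t-1},1]}p_{C_t}(x)$. Chaining these facts,
\[
\ell_t\le\min_{x\in[\ell_{t-1},1]}p_{C_t}(x)\le F_{k_t}^g(\ell_{t-1})\le F_{k_t}^g(\ell_{t-1}^\star)=\ell_t^\star,
\]
where the last inequality is \Cref{lem:monotone}. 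This monotonicity step is the only place where the induction hypothesis enters.

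For (iii), check the constraints of \eqref{eq:full-problem} for each $t$. The rounding constraint holds by assumption, since it coincides with the constraint of \eqref{eq:design}. The lower bound $p_{C_t^\star}(x)\ge\ell_t^\star$ on $[\ell_{t-1}^\star,1]$ holds by the defining equality for the minimum. The upper bound $p_{C_t^\star}\le1$ on $[\ell_{t-1}^\star,1]\subseteq[0,1]$ comes from the rounding constraint at $\mathbf v=(1,\ldots,1)$, where $\widetilde p_C=p_C$; this is the step that is easy to overlook. Also $\ell_t^\star\in[0,1]$, as noted after \Cref{def:per-iteration}.

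For the closing claim, \Cref{lem:attain} (with $s>1$) gives, for each $t$ in turn, a circuit $C_t^\star$ attaining $F_{k_t}^g(\ell_{t-1}^\star)=\ell_t^\star$, so greedy circuits exist. By (iii) they are feasible with value $\ell_T^\star$, and by (ii) no feasible point for this schedule exceeds $\ell_T^\star$. Hence $\ell_T^\star$ is the optimal value for the schedule and the greedy circuits attain it. I expect no real obstacle: the substantive inputs are monotonicity and attainment, both already proved.
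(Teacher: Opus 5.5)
Your proposal is correct and follows the paper's proof essentially step for step. It uses induction for (i) and (ii) with the same chain $\ell_t\le\min_{x\in[\ell_{t-1},1]}p_{C_t}(x)\le F_{k_t}^g(\ell_{t-1})\le F_{k_t}^g(\ell_{t-1}^\star)$ via \Cref{lem:monotone}, the upper bound in (iii) from $\mathbf{v}=(1,\ldots,1)$, and the closing claim from \Cref{lem:attain}; your explicit check that $\ell_t^\star\in[0,1]$ is a detail the paper leaves to the remark after \Cref{def:per-iteration}.
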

\begin{proof}
We prove (i) and (ii) by induction on $t$.
For $t=0$, the composition is the identity, so the set in (i) is $[\ell_0,1]$, and $\ell_0=\ell_0^\star$.
Assume that (i) and (ii) hold for $t-1$.

(i) The composition of the first $t-1$ outer iterations maps $[\ell_0,1]$ into $[\ell_{t-1},1]$, and the first constraint of \eqref{eq:full-problem} gives $p_{C_t}([\ell_{t-1},1])\subseteq[\ell_t,1]$, so the composition of the first $t$ outer iterations maps $[\ell_0,1]$ into $[\ell_t,1]$.

(ii) The second constraint of \eqref{eq:full-problem} is the constraint of \eqref{eq:design}, so $C_t$ is admissible in the supremum that defines $F_{k_t}^g(\ell_{t-1})$.
The first constraint gives $\ell_t\le\min_{x\in[\ell_{t-1},1]}p_{C_t}(x)\le F_{k_t}^g(\ell_{t-1})$.
Because $\ell_{t-1}\le\ell_{t-1}^\star$, \Cref{lem:monotone} gives $F_{k_t}^g(\ell_{t-1})\le F_{k_t}^g(\ell_{t-1}^\star)=\ell_t^\star$.

(iii) With $\mathbf{v}=(1,\ldots,1)$, the constraint of \eqref{eq:design} gives $p_{C_t^\star}(x)\le1$ for $x\in[0,1]$.
Together with $\min_{x\in[\ell_{t-1}^\star,1]}p_{C_t^\star}(x)=\ell_t^\star$, this gives $p_{C_t^\star}([\ell_{t-1}^\star,1])\subseteq[\ell_t^\star,1]$.

By \Cref{lem:attain}, the supremum in \eqref{eq:design} is attained at each $\ell_{t-1}^\star$, so greedy circuits exist.
By (ii), no feasible point has $\ell_T>\ell_T^\star$, and by (iii), greedy circuits attain $\ell_T^\star$.
\end{proof}

By \Cref{thm:greedy}(i), the composition of all $T$ outer iterations maps $[\ell_0,1]$ into $[\ell_T,1]$.
The next lemma gives the constant that centers $[\ell_T,1]$ at one.

\begin{lemma}[Final centering]
\label{lem:centering}
For $\ell\in(0,1]$,
\[
 \min_{\lambda>0}\ \max_{y\in[\ell,1]}|1-\lambda y|=\frac{1-\ell}{1+\ell},
\]
and $\lambda=2/(1+\ell)$ attains the minimum.
This minimum strictly decreases in $\ell$.
\end{lemma}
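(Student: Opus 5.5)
For a fixed $\lambda>0$, the function $y\mapsto|1-\lambda y|$ is convex on $[\ell,1]$. Its maximum over the interval is therefore attained at an endpoint:
\[
 E(\lambda):=\max_{y\in[\ell,1]}|1-\lambda y|=\max\bigl(|1-\lambda\ell|,\,|1-\lambda|\bigr).
\]
The plan is to minimize this explicit function of one variable, show that the minimum is attained where the two endpoint errors balance, and then check monotonicity in $\ell$ directly.

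\textbf{Step 1: lower bound.} Fix $\lambda>0$. The two endpoint errors obey an inequality that holds for every $\lambda$:
\[
 \ell\,|1-\lambda|+|1-\lambda\ell|\ \ge\ |\ell(\lambda-1)+(1-\lambda\ell)|=1-\ell.
\]
Both errors are at most $E(\lambda)$, so the left-hand side is at most $(1+\ell)E(\lambda)$. This gives $E(\lambda)\ge(1-\ell)/(1+\ell)$ for all $\lambda>0$. The only thing to check is the sign bookkeeping: the linear combination $\ell(\lambda-1)+(1-\lambda\ell)$ has $\lambda$ cancel exactly, which is what makes the bound uniform in $\lambda$.

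\textbf{Step 2: attainment.} Set $\lambda=2/(1+\ell)$. Then
\[
 1-\lambda\ell=\frac{1-\ell}{1+\ell},\qquad 1-\lambda=-\frac{1-\ell}{1+\ell},
\]
so both endpoint errors equal $(1-\ell)/(1+\ell)$. Hence $E(\lambda)=(1-\ell)/(1+\ell)$, and the minimum over $\lambda>0$ is attained at this $\lambda$ and equals the claimed value.

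\textbf{Step 3: strict monotonicity.} Write
\[
 \frac{1-\ell}{1+\ell}=\frac{2}{1+\ell}-1.
\]
The right-hand side is strictly decreasing on $(0,1]$; equivalently, its derivative $-2/(1+\ell)^2$ is negative.

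None of the steps is a real obstacle. The only point needing care is the convexity argument that reduces the maximum over $[\ell,1]$ to the two endpoints, and the fact that $\ell>0$ is what makes $\lambda=2/(1+\ell)$ finite and well defined.
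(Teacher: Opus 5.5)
Your proof is correct. It shares the paper's first move (convexity reduces the maximum to the endpoints $y=\ell$ and $y=1$) and its last step ($\frac{1-\ell}{1+\ell}=\frac{2}{1+\ell}-1$). The core minimization over $\lambda$, however, is argued differently.

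The paper resolves the absolute values, using $\ell\le1$ to write the maximum as $\max(1-\lambda\ell,\lambda-1)$. It then argues that the first term decreases and the second increases in $\lambda$, so their maximum is smallest where they cross, at $\lambda(1+\ell)=2$. You instead give a bound that is uniform in $\lambda$, via the weighted triangle inequality $\ell|1-\lambda|+|1-\lambda\ell|\ge1-\ell$, in which $\lambda$ cancels. You then exhibit $\lambda=2/(1+\ell)$ as attaining this bound, which is a dual certificate in the spirit of equioscillation.

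Your route needs no sign bookkeeping and no appeal to the crossing-point principle. Its lower bound does not even need convexity, only that $\ell$ and $1$ lie in $[\ell,1]$; convexity enters only when you evaluate the maximum at the attaining $\lambda$. The paper's route instead shows the whole shape of the maximum as a function of $\lambda$: the larger of a decreasing and an increasing linear function. This also makes clear that the minimizer is unique, which the statement does not require.

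One small inaccuracy in your closing remark: $2/(1+\ell)$ is finite at $\ell=0$ as well, and your argument goes through unchanged there. So $\ell>0$ is not what makes this $\lambda$ well defined.
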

\begin{proof}
Fix $\lambda>0$.
The function $y\mapsto|1-\lambda y|$ is convex, so its maximum on $[\ell,1]$ is attained at an endpoint.
Since $\ell\le1$, we have $1-\lambda\ell\ge1-\lambda$ and $\lambda-1\ge\lambda\ell-1$, so this maximum is $\max(1-\lambda\ell,\lambda-1)$.
The first term decreases and the second term increases in $\lambda$, so their maximum is smallest when they are equal, that is, when $\lambda(1+\ell)=2$.
Then both terms equal $(1-\ell)/(1+\ell)=2/(1+\ell)-1$, which strictly decreases in $\ell$.
\end{proof}

The next proposition bounds the error of the centered output of any feasible point of \eqref{eq:full-problem}.

\begin{proposition}[Error bound after centering]
\label{thm:relation}
Let $C_t$ and $\ell_t$, $t=1,\ldots,T$, satisfy the constraints of \eqref{eq:full-problem} with $\ell_T>0$, and let $\widehat C_T$ be $C_T$ with the coefficients of its last inner iteration multiplied by $\lambda=2/(1+\ell_T)$.
Then, in exact arithmetic, $\widehat p=p_{\widehat C_T}\circ p_{C_{T-1}}\circ\cdots\circ p_{C_1}$ satisfies $\max_{x\in[\ell_0,1]}|1-\widehat p(x)|\le(1-\ell_T)/(1+\ell_T)$, and $\|{\operatorname{polar}(X)-\widehat p(X)}\|_2\le(1-\ell_T)/(1+\ell_T)$ for every $X\in\mathbb{R}^{n\times m}$ with singular values in $[\ell_0,1]$.
\end{proposition}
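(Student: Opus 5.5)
The plan is to chain three facts already established: the interval containment of \Cref{thm:greedy}(i), the effect of rescaling the last inner iteration, and the centering bound of \Cref{lem:centering}. The matrix statement then follows from the scalar reduction in the proof of \Cref{prop:scalar}.

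First, I will check that multiplying $a_{k_T},b_{k_T},c_{k_T}$ of the last inner iteration of $C_T$ by $\lambda$ multiplies its output polynomial by $\lambda$. The last inner iteration writes $\beta_{w_{k_T}}^{(k_T)}=a\beta_i\beta_j+b\beta_r+c$, and its inputs are computed by earlier inner iterations whose coefficients are unchanged. Scaling $(a,b,c)$ by $\lambda$ therefore scales $\beta_{w_{k_T}}^{(k_T)}$ by $\lambda$, and hence $p_{\widehat C_T}=\lambda p_{C_T}$. The only subtlety is that no later inner iteration reads this buffer, which holds because $k_T$ is the last inner iteration. Consequently $\widehat p=\lambda\,p$, where $p=p_{C_T}\circ\cdots\circ p_{C_1}$. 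The earlier outer iterations are untouched, so the composition factors this way.

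Second, by \Cref{thm:greedy}(i), which only uses the first constraint of \eqref{eq:full-problem} and induction, $p([\ell_0,1])\subseteq[\ell_T,1]$. For every $x\in[\ell_0,1]$, set $y=p(x)\in[\ell_T,1]$. Since $\ell_T>0$, \Cref{lem:centering} with $\ell=\ell_T$ and $\lambda=2/(1+\ell_T)$ gives
\[
|1-\widehat p(x)|=|1-\lambda y|\le\max_{y'\in[\ell_T,1]}|1-\lambda y'|=\frac{1-\ell_T}{1+\ell_T}.
\]
Taking the maximum over $x$ yields the scalar claim.

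Third, for the matrix claim, let $X=U\Sigma V^\mathsf{T}$ be the compact SVD of $X$. We need $\widehat p(X)=U\widehat p(\Sigma)V^\mathsf{T}$, which requires each intermediate output to be a valid SVD, i.e.\ each $p_{C_t}$ to be nonnegative on the current singular values. This holds because $p_{C_t}([\ell_{t-1},1])\subseteq[\ell_t,1]\subseteq[0,1]$, and for the final step $\lambda>0$. Since $\widehat p$ is odd, the formula also holds directly via $h_t(XX^\mathsf{T})X$. Then, as in the proof of \Cref{prop:scalar},
\[
\|\operatorname{polar}(X)-\widehat p(X)\|_2=\max_i|1-\widehat p(\sigma_i)|\le\frac{1-\ell_T}{1+\ell_T}.
\]

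I expect no real obstacle. The step needing care is the first, confirming that the rescaling affects only the final output and not any reused buffer. The second constraint of \eqref{eq:full-problem}, the rounding constraint, is not needed here.
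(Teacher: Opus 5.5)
Your proposal is correct and follows the paper's own proof: you scale the last inner iteration to get $p_{\widehat C_T}=\lambda p_{C_T}$, use \Cref{thm:greedy}(i) to place the composition in $[\ell_T,1]$, apply \Cref{lem:centering}, and finish with the scalar reduction of \Cref{prop:scalar}. Your extra remark, that oddness gives $\widehat p(X)=U\widehat p(\Sigma)V^\mathsf{T}$ regardless of sign, is a harmless refinement, since the paper defines $p(X)$ this way directly.
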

\begin{proof}
The last inner iteration of $C_T$ is linear in $a_{k_T},b_{k_T},c_{k_T}$, and no later inner iteration reads its output.
Thus, $\beta_{w_{k_T}}^{(k_T)}$ and $p_{C_T}$ are multiplied by $\lambda$, so $p_{\widehat C_T}=\lambda p_{C_T}$.
By \Cref{thm:greedy}(i), $p_{C_T}\circ\cdots\circ p_{C_1}$ maps $[\ell_0,1]$ into $[\ell_T,1]$, so $\widehat p$ maps it into $[\lambda\ell_T,\lambda]$.
By \Cref{lem:centering}, $\max_{x\in[\ell_0,1]}|1-\widehat p(x)|\le\max_{y\in[\ell_T,1]}|1-\lambda y|=(1-\ell_T)/(1+\ell_T)$.
The bound on $\|{\operatorname{polar}(X)-\widehat p(X)}\|_2$ follows as in the proof of \Cref{prop:scalar}.
\end{proof}

Thus, for each schedule, greedy circuits attain the largest $\ell_T$ in \eqref{eq:full-problem}, and centering their output gives the error bound $(1-\ell_T^\star)/(1+\ell_T^\star)$, which strictly decreases in $\ell_T^\star$ (\Cref{lem:centering}).
It remains to choose the schedule with $\sum_{t=1}^T(3\alpha+k_t)\le f_{\max}$ and $k_t\le k_{\max}$ that maximizes $\ell_T^\star$, by dynamic programming over the cost, as in~\citep{lee2022minimax}.

\begin{proposition}[Optimal schedule under a FLOP budget]
\label{thm:budget}
Let $0<\ell_0<1$, $\alpha\ge1$, and $3\alpha+1\le f_{\max}<\infty$.
For a cost $f$, let $L(f)$ be the largest $\ell_T^\star$ over the schedules with $\sum_{t=1}^T(3\alpha+k_t)=f$ and $k_t\le k_{\max}$, and let $L(0)=\ell_0$.
Then
\[
 L(f)=\max_{1\le k\le k_{\max}}F_k^g\left(L(f-3\alpha-k)\right),
\]
over the $k$ for which $L(f-3\alpha-k)$ is defined.
A schedule that attains $\max_{0<f\le f_{\max}}L(f)$ maximizes $\ell_T^\star$ subject to $\sum_{t=1}^T(3\alpha+k_t)\le f_{\max}$ and $k_t\le k_{\max}$.
With greedy circuits, this schedule solves \eqref{eq:full-problem}.
\end{proposition}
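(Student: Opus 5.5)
The plan is a standard dynamic-programming argument over the finitely many achievable costs. I would combine \Cref{thm:greedy}, which says a fixed schedule's optimal value is $\ell_T^\star$ from the greedy recursion, with \Cref{lem:monotone}, which says each $F_k^g$ is nondecreasing.

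First, I record finiteness. Each outer iteration costs at least $3\alpha+1>0$, so any schedule within budget has $T\le f_{\max}/(3\alpha+1)$. With $k_t\le k_{\max}$, only finitely many schedules have cost at most $f_{\max}$. Hence the set of attainable costs $f\le f_{\max}$ is finite, and $L(f)$ is a maximum over a finite nonempty set whenever it is defined.

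For the recursion, fix an attainable cost $f>0$ and a schedule $(k_1,\ldots,k_T)$ of cost $f$. Removing the last outer iteration leaves a schedule of cost $f-3\alpha-k_T$. When $T=1$, this is the empty schedule with cost $0$ and value $L(0)=\ell_0$. By definition $\ell_{T-1}^\star$ is that shorter schedule's value, so $\ell_{T-1}^\star\le L(f-3\alpha-k_T)$. By \Cref{lem:monotone}, $\ell_T^\star=F_{k_T}^g(\ell_{T-1}^\star)\le F_{k_T}^g(L(f-3\alpha-k_T))$, which gives ``$\le$''. For ``$\ge$'', take $k$ with $L(f-3\alpha-k)$ defined, and a schedule of that cost attaining $L(f-3\alpha-k)$ (the maximum exists by finiteness). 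Appending $k$ gives a schedule of cost $f$ whose greedy value is exactly $F_k^g(L(f-3\alpha-k))$, since $\ell^\star$ is defined recursively by $\ell_t^\star=F_{k_t}^g(\ell_{t-1}^\star)$. The one point to check is that prefixes and extensions stay within the admissible class: $k_t\le k_{\max}$ is preserved, and the cost identity is additive.

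For the second claim, any admissible schedule with total cost $\le f_{\max}$ has some attainable cost $f\in(0,f_{\max}]$, so its $\ell_T^\star\le L(f)\le\max_{0<f\le f_{\max}}L(f)$. This maximum exists by finiteness, and a schedule attaining it is feasible. Finally, \Cref{thm:greedy} shows that for every schedule the optimal value of \eqref{eq:full-problem} is $\ell_T^\star$, attained by greedy circuits, which exist by \Cref{lem:attain} since $s>1$. So the chosen schedule with greedy circuits attains the maximum of \eqref{eq:full-problem} over all schedules and circuits.

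The main obstacle is mild bookkeeping rather than analysis. The recursion must be stated only over attainable costs, where $L$ is defined, and a possibly noninteger $\alpha$ makes costs lie in a finite set of reals rather than integers. The finiteness argument is what makes every max well defined.
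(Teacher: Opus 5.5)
Your proposal is correct and follows the paper's proof essentially step for step: finiteness from the per-iteration cost bound $3\alpha+1>0$, the upper bound by deleting $k_T$ and applying \Cref{lem:monotone}, the lower bound by appending $k$ to a schedule attaining $L(f-3\alpha-k)$, and the final appeal to \Cref{thm:greedy} and \Cref{lem:attain}. Restricting explicitly to attainable, possibly noninteger, costs is a harmless clarification the paper leaves implicit, and the maximum $\max_{0<f\le f_{\max}}L(f)$ is over a nonempty set because $f_{\max}\ge3\alpha+1$ admits the schedule $(1)$.
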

\begin{proof}
Each outer iteration costs $3\alpha+k_t\ge3\alpha+1>0$.
Thus, a schedule of cost $f$ has $T\le f/(3\alpha+1)$, and with $k_t\le k_{\max}$ there are finitely many such schedules, so each $L(f)$ is a maximum.

We first show that $L(f)$ is at most the right-hand side.
Let $(k_1,\ldots,k_T)$ have cost $f>0$.
Removing $k_T$ gives a schedule of cost $f-3\alpha-k_T$ whose lower endpoint satisfies $\ell_{T-1}^\star\le L(f-3\alpha-k_T)$.
If $T=1$, this is the empty schedule with $\ell_0^\star=L(0)$.
By \Cref{lem:monotone}, $\ell_T^\star=F_{k_T}^g(\ell_{T-1}^\star)\le F_{k_T}^g(L(f-3\alpha-k_T))$, so $L(f)$ is at most the right-hand side.

Conversely, for each admissible $k$, appending $k$ to a schedule that attains $L(f-3\alpha-k)$ gives a schedule of cost $f$ with $\ell_T^\star=F_k^g(L(f-3\alpha-k))$.
Hence, $L(f)$ is at least the right-hand side.

Since $3\alpha+k>0$, the recursion expresses $L(f)$ through values of $L$ at smaller costs.
Thus, we can compute $L(f)$ for all costs $f\le f_{\max}$ in increasing order and keep, for each cost, a schedule that attains $L(f)$.
A schedule with the largest $L(f)$ over $0<f\le f_{\max}$ maximizes $\ell_T^\star$ over all feasible schedules.
By \Cref{thm:greedy}, the supremum of $\ell_T$ in \eqref{eq:full-problem} for each schedule is $\ell_T^\star$, so this schedule with greedy circuits solves \eqref{eq:full-problem}.
\end{proof}

\subsection{Numerical Search and Coefficients}
\label{app:minimax-solve}

\Cref{app:greedy} reduces \eqref{eq:full-problem} to the per-iteration problem \eqref{eq:design}.
We solve \eqref{eq:design} numerically, for one inner depth $k$ and one input interval $[\ell,1]$ at a time.
For $g=2$, we use this search in place of each $F_k^g$ in the recursion of \Cref{thm:budget}.
For $g=3$, we fix the schedules $(5,5)$, $(6,4)$, and $(5,4)$, and we apply the search to their outer iterations in order, starting from $[\ell_0,1]$ (\Cref{sec:design}).

\paragraph{Sampled problem.}
We call $(w_\tau,i_\tau,j_\tau,r_\tau)_{\tau=1}^k$ the buffer pattern of $C\in\mathcal C_k^g$.
For each buffer pattern, the search optimizes only $a_\tau,b_\tau,c_\tau$, $\tau=1,\ldots,k$.
For $g=2$, it tries all $2^{k-1}$ alternating buffer patterns, for example 32 for $k=6$ (\Cref{sec:search}).
For $g=3$, it first removes some patterns (degree-bound pruning below).
Problem \eqref{eq:design} has a constraint for every $y\in[0,1]$ and every $\mathbf{v}$, so the search replaces $[\ell,1]$, $[0,1]$, and $[s^{-1},s]^{k+2}$ with finite sets $\mathcal X\subseteq[\ell,1]$, $\mathcal Y\subseteq[0,1]$, and $\mathcal V\subseteq[s^{-1},s]^{k+2}$, and it computes a local solution of
\begin{equation}
 \max_{a_\tau,b_\tau,c_\tau\in\mathbb{R},\ \tau=1,\ldots,k}\ \min_{x\in\mathcal X}p_C(x)
 \qquad\text{subject to}\quad 0\le\widetilde p_C(y;\mathbf{v})\le1\quad(y\in\mathcal Y,\ \mathbf{v}\in\mathcal V).
 \label{eq:sampled}
\end{equation}
At the start, $\mathcal X$ contains 161 geometrically and 161 uniformly spaced points in $[\ell,1]$, $\mathcal Y$ also contains 33 uniformly spaced points in $[0,\ell]$, and $\mathcal V$ contains random points and corners of $[s^{-1},s]^{k+2}$.
The search solves \eqref{eq:sampled} by local optimization from several starting coefficients, which are random or come from earlier runs.
Each start gives one candidate circuit.

\paragraph{Adversarial points.}
A candidate can violate the constraint of \eqref{eq:design} between the points of $\mathcal X$, $\mathcal Y$, and $\mathcal V$.
Thus, at regular intervals during the local optimization, the search finds the inputs and rounding factors where $p_C$ is smallest on $[\ell,1]$ and where $\widetilde p_C$ is nearest to $0$ or $1$.
It finds these points by local minimization from the worst points of $\mathcal X$, $\mathcal Y$, and $\mathcal V$, and adds them to these sets.

\paragraph{Certification.}
Finite sets cannot show the constraint of \eqref{eq:design} for all inputs and rounding factors.
Thus, we accept a candidate only after interval arithmetic with outward rounding shows that
\[
 p_C([\ell,1])\subseteq[\ell',1],
 \qquad
 \widetilde p_C\left([0,1];[s^{-1},s]^{k+2}\right)\subseteq[0,1],
 \qquad
 \ell'>0.
\]
We say that this check certifies the candidate.
Interval arithmetic computes intervals that contain all exact values of $p_C$ and $\widetilde p_C$, and outward rounding keeps them valid in floating-point arithmetic.
Certification is expensive, so we certify only the best candidates on $\mathcal X$, $\mathcal Y$, and $\mathcal V$.

\paragraph{Refinement.}
Certification also gives several points: the points in the whole domain where it finds $p_C$ smallest on $[\ell,1]$ and $\widetilde p_C$ nearest to $0$ or $1$.
The search adds these points to $\mathcal X$, $\mathcal Y$, and $\mathcal V$, runs the local optimization again from the candidate, and certifies the new candidate.
It repeats this for a few rounds, until $\ell'$ stops increasing.
Over all buffer patterns and starting coefficients, the search returns the accepted candidate with the largest $\ell'$, and the search for the next outer iteration uses $[\ell',1]$.
After the rescaling for \texttt{float16} below, we certify the rescaled circuit again.

\paragraph{Degree-bound pruning.}
\label{app:degree-pruning}
For $g=3$, the search removes buffer patterns before the local optimization.
The buffer pattern bounds the degree of $p_C$.
A polynomial of bounded degree that satisfies the constraint of \eqref{eq:design} and is zero at zero cannot increase arbitrarily fast, so inequalities for bounded polynomials, such as the Bernstein--Szeg\H{o} inequality~\citep{milev2005bernstein}, bound $p_C(\ell)$.
The search removes a pattern when this bound is at most the certified $\ell'$ of the best circuit found so far for the same $k$ and $[\ell,1]$.

\paragraph{Rescaling for \texttt{float16}.}
\label{app:intervals}
In exact arithmetic, each outer iteration before centering maps $[0,1]$ into $[0,1]$, but the stored values $\beta_{w_\tau}^{(\tau)}(x)$ can be very large or very small.
We can rescale them, because each $\beta_{w_\tau}^{(\tau)}$ before the last inner iteration has a free scale.
For example, the first two inner iterations of \textsf{PS2-55} in \Cref{alg:polar-switch-methods} are, with coefficients rounded to four significant digits,
\[
 B\leftarrow A^2-1.039A+0.001138I,
 \qquad
 A\leftarrow164.1B^2+49.58B+0.0005001I.
\]
For $\kappa\ne0$, replacing them by
\[
 B\leftarrow\kappa\left(A^2-1.039A+0.001138I\right),
 \qquad
 A\leftarrow\frac{164.1}{\kappa^2}B^2+\frac{49.58}{\kappa}B+0.0005001I
\]
multiplies the first stored value by $\kappa$ and does not change the second, so $p_C$ and $\widetilde p_C$ do not change.
Step 3 of the proof of \Cref{lem:attain} gives this rescaling for any circuit.
Before we use the circuits, we choose the scales so that a certified upper bound on $\max_{x\in[0,1]}|\beta_{w_\tau}^{(\tau)}(x)|$ lies in $[1/16,1024]$ for each $\tau=1,\ldots,k-1$.
The rescaling does not change $\widetilde p_C$, so the rescaled circuits still satisfy the rounding constraint of \eqref{eq:design} before centering.

\paragraph{\texttt{Polar Switch} methods.}
\Cref{alg:polar-switch-methods} gives all \texttt{Polar Switch} methods after this rescaling, with the buffers $B_1,B_2,B_3$ of \Cref{alg:standard} written as $A,B,C$ and with each coefficient $\pm1$ written as its sign only.
\Cref{sec:training} recommends inner depths of at most five.
The comment after each $A\gets XX^\mathsf{T}$ shows how the outer iteration maps the singular values in exact arithmetic, starting from $[0.001,1]$, and the last outer iteration includes the centering factor $2/(1+\ell_T)$ of \Cref{thm:relation}.
The rounding constraint keeps each exact output below one, so the final intervals are not symmetric about one.

\begin{breakablealgorithm}
\caption{Circuits and coefficients of the \texttt{Polar Switch} methods. We recommend \textsf{PS2-55} and \textsf{PS3-55}.}
\label{alg:polar-switch-methods}
\begin{algorithmic}[1]
\Require $X\in\mathbb{R}^{n\times m}$, $n\le m$; a \texttt{Polar Switch} method
\Ensure Approximation to the polar factor of $X$
\State $X\gets\texttt{float32}(X);\quad X\gets X/(\lVert X\rVert_F+10^{-7});\quad X\gets\texttt{float16}(X)$
\Switch{the method}
\Case{\textsf{PS2-55}}
\State $A\gets XX^\mathsf{T}$ \Comment{Outer iteration 1: $[0.001,1]\to[0.045888,0.979992]$}
\State $B\gets A^2 - 1.039372688617118A + 0.0011376825939118764I$
\State $A\gets 164.07644513428048B^2 + 49.58061223879703B + 0.000500089059050944I$
\State $B\gets A^2 + 4.238519015111072A + 2.9096036081096996I$
\State $A\gets B^2 - 0.7099643179341211A - 3.188845137257469I$
\State $B\gets A^2 + 0.10116427259090495B + 0.36987195573416237I$
\State $X\gets BX$
\State $A\gets XX^\mathsf{T}$ \Comment{Outer iteration 2: $[0.045888,0.979992]\to[0.872263,1.106236]$}
\State $B\gets 3.6023274985419302A^2 - 6.053717133712312A + 0.03621204508077079I$
\State $A\gets B^2 + 2.8221769622596162B + 1.4534435713646865I$
\State $B\gets A^2 + 0.5287633553256361B - 0.15855532053714858I$
\State $A\gets B^2 - 0.3880284725332113A - 0.745819141374713I$
\State $B\gets A^2 + 2.072314176413394B + 3.287628403409071I$
\State $X\gets BX$
\EndCase
\Case{\textsf{PS2-64}}
\State $A\gets XX^\mathsf{T}$ \Comment{Outer iteration 1: $[0.001,1]\to[0.078772,0.983217]$}
\State $B\gets 11.497164886604264A^2 - 12.511452967934748A + 0.004636590213080262I$
\State $A\gets B^2 + 3.573449564442145B + 0.0001482458025758612I$
\State $B\gets A^2 + 3.7647709198378587A + 0.0005602863971002667I$
\State $A\gets B^2 + 4.406111844100261B + 3.3624041947754058I$
\State $B\gets A^2 - 1.2242502595103495B - 4.464869468776392I$
\State $A\gets B^2 + 0.28137156744401426A + 1.0392921163482471I$
\State $X\gets AX$
\State $A\gets XX^\mathsf{T}$ \Comment{Outer iteration 2: $[0.078772,0.983217]\to[0.897110,1.081213]$}
\State $B\gets 5.598448569756209A^2 - 6.500656412649984A + 1.4867267155400337I$
\State $A\gets B^2 - 0.9846462094765358A - 1.917580856812255I$
\State $B\gets A^2 + 3.2502877930209344A + 1.959088303306074I$
\State $A\gets B^2 + 1.4407850437758025A + 4.709485957934938I$
\State $X\gets AX$
\EndCase
\Case{\textsf{PS2-54}}
\State $A\gets XX^\mathsf{T}$ \Comment{Outer iteration 1: $[0.001,1]\to[0.045888,0.979992]$}
\State $B\gets A^2 - 1.039372688617118A + 0.0011376825939118764I$
\State $A\gets 164.07644513428048B^2 + 49.58061223879703B + 0.000500089059050944I$
\State $B\gets A^2 + 4.238519015111072A + 2.9096036081096996I$
\State $A\gets B^2 - 0.7099643179341211A - 3.188845137257469I$
\State $B\gets A^2 + 0.10116427259090495B + 0.36987195573416237I$
\State $X\gets BX$
\State $A\gets XX^\mathsf{T}$ \Comment{Outer iteration 2: $[0.045888,0.979992]\to[0.771715,1.201363]$}
\State $B\gets 6.901850900116884A^2 - 7.814674292160296A + 1.6529257960327404I$
\State $A\gets B^2 - 0.9827284771159464A - 2.485618081012963I$
\State $B\gets A^2 + 3.89928636441674A + 2.691162325834262I$
\State $A\gets B^2 + 1.1933903228025404A + 4.548823708141432I$
\State $X\gets AX$
\EndCase
\Case{\textsf{PS2-512}}
\State $A\gets XX^\mathsf{T}$ \Comment{Outer iteration 1: $[0.001,1]\to[0.045888,0.979992]$}
\State $B\gets -12.809232808247291A^2 + 13.313566743030574A - 0.014572841207339851I$
\State $A\gets -B^2 + 3.870693349203208B - 0.0005000890581930987I$
\State $B\gets -A^2 + 4.238519015153693A - 2.909603608169363I$
\State $A\gets B^2 + 0.7099643179547215A - 3.1888451373814886I$
\State $B\gets A^2 - 0.1011642725968188B + 0.3698719557629632I$
\State $X\gets BX$
\State $A\gets XX^\mathsf{T}$ \Comment{Outer iteration 2: $[0.045888,0.979992]\to[0.183165,0.987609]$}
\State $B\gets 7.900332287031677A^2 - 10.949298810612854A + 4.014586689560672I$
\State $X\gets BX$
\State $A\gets XX^\mathsf{T}$ \Comment{Outer iteration 3: $[0.183165,0.987609]\to[0.894082,1.091006]$}
\State $B\gets -6.169925115269519A^2 + 7.663095425175341A - 1.9889058140650846I$
\State $A\gets B^2 - 1.0658391416925121A + 1.8937604780225805I$
\State $X\gets AX$
\EndCase
\Case{\textsf{PS2-4111}}
\State $A\gets XX^\mathsf{T}$ \Comment{Outer iteration 1: $[0.001,1]\to[0.024769,0.957717]$}
\State $B\gets -A^2 + 1.1102700886611114A - 0.004239199255238769I$
\State $A\gets -146.1291296102139B^2 + 48.29423778494536B - 2.470081603735498I$
\State $B\gets -A^2 - 4.60219907694991B + 6.153664831948212I$
\State $A\gets B^2 - 7.931721104744646B + 15.885015057782338I$
\State $X\gets AX$
\State $A\gets XX^\mathsf{T}$ \Comment{Outer iteration 2: $[0.024769,0.957717]\to[0.101332,0.987609]$}
\State $B\gets 8.471969885110855A^2 - 11.606992370953925A + 4.098200452703165I$
\State $X\gets BX$
\State $A\gets XX^\mathsf{T}$ \Comment{Outer iteration 3: $[0.101332,0.987609]\to[0.375000,0.987609]$}
\State $B\gets 6.524428222537468A^2 - 9.349022444806094A + 3.7960064446656627I$
\State $X\gets BX$
\State $A\gets XX^\mathsf{T}$ \Comment{Outer iteration 4: $[0.375000,0.987609]\to[0.930600,1.056149]$}
\State $B\gets 2.599619231760616A^2 - 4.624248122492742A + 3.0804743574608175I$
\State $X\gets BX$
\EndCase
\Case{\textsf{PS2-11111}}
\State $A\gets XX^\mathsf{T}$ \Comment{Outer iteration 1: $[0.001,1]\to[0.004191,0.987609]$}
\State $B\gets 9.141188870286298A^2 - 12.372497198168581A + 4.191604898553078I$
\State $X\gets BX$
\State $A\gets XX^\mathsf{T}$ \Comment{Outer iteration 2: $[0.004191,0.987609]\to[0.017516,0.987609]$}
\State $B\gets 9.049938391659953A^2 - 12.268381113002091A + 4.179133497635361I$
\State $X\gets BX$
\State $A\gets XX^\mathsf{T}$ \Comment{Outer iteration 3: $[0.017516,0.987609]\to[0.072222,0.987609]$}
\State $B\gets 8.673507075405423A^2 - 11.838007913719911A + 4.126813472556659I$
\State $X\gets BX$
\State $A\gets XX^\mathsf{T}$ \Comment{Outer iteration 4: $[0.072222,0.987609]\to[0.278599,0.987609]$}
\State $B\gets 7.222927473854947A^2 - 10.16475257798277A + 3.910313522197052I$
\State $X\gets BX$
\State $A\gets XX^\mathsf{T}$ \Comment{Outer iteration 5: $[0.278599,0.987609]\to[0.867287,1.118678]$}
\State $B\gets 3.8847023442663615A^2 - 6.353149203553178A + 3.582738700662502I$
\State $X\gets BX$
\EndCase
\Case{\textsf{PS3-55}}
\State $A\gets XX^\mathsf{T}$ \Comment{Outer iteration 1: $[0.001,1]\to[0.046460,0.978218]$}
\State $B\gets -A^2 + 1.0372630529657008A - 0.0011080641835321228I$
\State $C\gets -169.7011946136095B^2 + 50.90200678616221B - 0.0004999998950287138I$
\State $B\gets -C^2 + 4.259459173066772C - 2.9027674651455544I$
\State $C\gets B^2 + 0.6326518033906352C - 3.1055845493128005I$
\State $A\gets C^2 - 0.5157563645706955A + 0.5614111759107233I$
\State $X\gets AX$
\State $A\gets XX^\mathsf{T}$ \Comment{Outer iteration 2: $[0.046460,0.978218]\to[0.898309,1.080954]$}
\State $B\gets 7.281110320128563A^2 - 7.974535412410232A + 0.041676558977155734I$
\State $C\gets B^2 + 2.6715673993971243B + 1.495553622588639I$
\State $A\gets C^2 - 1.103759381672208A - 0.3426296621352286I$
\State $B\gets A^2 - 0.39530569914337854B - 1.172904844980118I$
\State $A\gets B^2 + 2.6713299568702316A + 3.4991981111096626I$
\State $X\gets AX$
\EndCase
\Case{\textsf{PS3-64}}
\State $A\gets XX^\mathsf{T}$ \Comment{Outer iteration 1: $[0.001,1]\to[0.084881,0.983377]$}
\State $B\gets 13.61303780449359A^2 - 13.826546610078044A + 0.004416673108912614I$
\State $C\gets B^2 + 3.673535324598937B + 0.0001452487295876094I$
\State $B\gets C^2 + 3.8954801610559158C + 0.0005680244195404851I$
\State $C\gets B^2 + 4.454317479650682B + 3.3893440889745525I$
\State $B\gets C^2 - 1.0853078608959024B - 4.366629679268323I$
\State $A\gets B^2 - 1.7963370170324888A + 1.8693023230294266I$
\State $X\gets AX$
\State $A\gets XX^\mathsf{T}$ \Comment{Outer iteration 2: $[0.084881,0.983377]\to[0.913241,1.070202]$}
\State $B\gets -6.592451443613544A^2 + 8.239733131416537A - 0.15364948278986565I$
\State $C\gets -B^2 + 2.40570058353004B - 0.8951407981135736I$
\State $B\gets -C^2 + 0.942592113051982B - 1.5885457840104837I$
\State $A\gets B^2 - 1.1355357330585019A + 2.0140727879667097I$
\State $X\gets AX$
\EndCase
\Case{\textsf{PS3-54}}
\State $A\gets XX^\mathsf{T}$ \Comment{Outer iteration 1: $[0.001,1]\to[0.046460,0.978218]$}
\State $B\gets -A^2 + 1.0372630529657008A - 0.0011080641835321228I$
\State $C\gets -169.7011946136095B^2 + 50.90200678616221B - 0.0004999998950287138I$
\State $B\gets -C^2 + 4.259459173066772C - 2.9027674651455544I$
\State $C\gets B^2 + 0.6326518033906352C - 3.1055845493128005I$
\State $A\gets C^2 - 0.5157563645706955A + 0.5614111759107233I$
\State $X\gets AX$
\State $A\gets XX^\mathsf{T}$ \Comment{Outer iteration 2: $[0.046460,0.978218]\to[0.800196,1.178895]$}
\State $B\gets -8.570223166079066A^2 + 9.830909786264433A - 1.8649399525020482I$
\State $C\gets -B^2 - 1.337706412340474A + 1.4220365873153455I$
\State $B\gets -C^2 + 2.184691275668604A + 0.32160367575037047I$
\State $A\gets B^2 - 9.409860595990587A + 4.317539075329395I$
\State $X\gets AX$
\EndCase
\EndSwitch
\State \Return $X$
\end{algorithmic}
\end{breakablealgorithm}

\section{Experimental Settings and Additional Results}
\label{app:campaign}

Each run uses one NVIDIA B200 GPU with PyTorch~2.14.0 and CUDA~13.0.
\texttt{Polar Switch}, \textsf{Jordan}, and \textsf{PE} use the symmetric and rectangular matrix multiplication kernels of QuACK, from the package \texttt{quack-kernels}\footnote{\url{https://github.com/Dao-AILab/quack/tree/10ce505a7ecf68e46753009bd08dcf9bd6c059ee}}~0.6.5.
We extend their epilogue with NVIDIA's CUTLASS\footnote{\url{https://github.com/NVIDIA/cutlass/tree/098de2a652cf8f00fd70b2df54051c7eccbb855a}} domain-specific language (DSL)~4.8.0 to add optional diagonal shifts, through a PyTorch custom operator.
QuACK requires both matrix dimensions to be at least 256 and multiples of 8.
For smaller matrices, such as the $192\times768$ matrices of Modded-NanoGPT \#91, we fall back to a Triton~\citep{tillet2019triton} implementation of the same method, with Triton\footnote{\url{https://github.com/triton-lang/triton/tree/c01b6774b1865984607d89d89d3a10833de92037}}~3.8.0.
\textsf{GNS} uses the official \texttt{Gram Newton--Schulz} code~\citep{zhang2026gram}.\footnote{\url{https://github.com/Dao-AILab/gram-newton-schulz/tree/e45d0aca7083cb275c9a303220c05c4abecd9187}}

\Cref{alg:polar-switch-methods} gives the \texttt{Polar Switch} methods, and \Cref{tab:baseline-coefficients} gives the coefficients of the baselines.
\textsf{Jordan} uses the fixed quintic coefficients of the Muon implementation~\citep{jordan2024muon} with 5 iterations.
\textsf{PE} uses the coefficients of the official \texttt{Polar Express} code~\citep{amsel2026polar}\footnote{\url{https://github.com/NoahAmsel/PolarExpress/blob/71cc37943d99cae780024c1d198977f2f8795407/polar_express.py}} with 5 iterations.\footnote{The code computes these coefficients with a cushion and a safety factor, both $0.02$. With the cushion, it fits each quintic on $[\max(\ell,0.02u),u]$ instead of the current interval $[\ell,u]$, and then scales it so that $p(\ell)+p(u)=2$. With the safety factor, it replaces each quintic $p(x)$ except the last one by $p(x/1.02)$.}
\textsf{PE (BF16)} uses the same coefficients with \texttt{bfloat16} storage and products.
\textsf{Jordan}, \textsf{PE}, and \textsf{PE (BF16)} follow \Cref{alg:baselines}.
\textsf{GNS} uses the five coefficient triples of the official code, restarts after its second quintic iteration, and centers the output of its last iteration at one.
In training, \textsf{GNS} falls back to \textsf{PE} for square matrices, as in the official code.
\begin{table}[htbp]
\centering
\caption{Baseline coefficients for $p_t(x)=c_tx+b_tx^3+a_tx^5$, including the safety factor of each method. \textsf{Jordan} and \textsf{GNS} are centered at the last iteration.\label{tab:baseline-coefficients}}
\begin{tabular}{@{}lcrrr@{}}
\toprule
Method & Iteration $t$ & $c_t$ & $b_t$ & $a_t$ \\ \midrule
\multirow{2}{*}{\textsf{Jordan}} & 1--4 & $3.4445$ & $-4.775$ & $2.0315$ \\
 & 5 & $4.117967776458191$ & $-5.708606802899656$ & $2.4286983706996126$ \\
\midrule
\multirow{5}{*}{\textsf{PE}} & 1 & $8.156554524902461$ & $-22.483292925577953$ & $15.878769915207457$ \\
 & 2 & $4.042929935166734$ & $-2.8089174659087113$ & $0.500017845105131$ \\
 & 3 & $3.891667802292659$ & $-2.772484153217684$ & $0.506064817850339$ \\
 & 4 & $3.2857536577556523$ & $-2.3681294933425354$ & $0.4644902423300304$ \\
 & 5 & $2.3465413258596395$ & $-1.709782838268708$ & $0.42323551169305257$ \\
\midrule
\multirow{5}{*}{\textsf{GNS}} & 1 & $7.892582874424408$ & $-20.38301394587957$ & $13.555306149406924$ \\
 & 2 & $3.911484868135431$ & $-2.5464635929060884$ & $0.4268988319673074$ \\
 & 3 & $3.760657955697423$ & $-2.512819018216563$ & $0.4323647349070073$ \\
 & 4 & $3.160399673686287$ & $-2.149649518898498$ & $0.3996366907664389$ \\
 & 5 & $2.365477637327074$ & $-1.5563980023821433$ & $0.3542623265416755$ \\
\bottomrule
\end{tabular}
\end{table}

\begin{algorithm}[htbp]
\caption{Quintic baselines \textsf{Jordan}, \textsf{PE}, and \textsf{PE (BF16)}}
\label{alg:baselines}
\begin{algorithmic}[1]
\Require $X\in\mathbb{R}^{n\times m}$, $n\le m$; coefficients $(a_t,b_t,c_t)_{t=1}^5$ from \Cref{tab:baseline-coefficients}; precision $\in\{\texttt{float16},\texttt{bfloat16}\}$
\Ensure Approximation to the polar factor of $X$
\If{precision is \texttt{float16}} \Comment{\textsf{Jordan} and \textsf{PE}, as in \Cref{alg:standard}}
  \State $X\gets\texttt{float32}(X);\quad X\gets X/(\lVert X\rVert_F+10^{-7});\quad X\gets\texttt{float16}(X)$
\Else \Comment{\textsf{PE (BF16)}}
  \State $X\gets\texttt{bfloat16}\left(X/(\lVert X\rVert_F+0.01)/1.02\right)$
\EndIf
\For{$t=1,\ldots,5$}
  \State $A\gets XX^\mathsf{T}$; $Z\gets b_tA+a_tA^2$; $X\gets ZX+c_tX$ \Comment{Stored in the chosen precision}
\EndFor
\State \Return $X$
\end{algorithmic}
\end{algorithm}

\subsection{LLM Experiments}
\label{app:training}

\paragraph{Modded-NanoGPT \#91.}
We use the training script of record \#91 of the Modded-NanoGPT short track~\citep{jordan2024moddednanogpt}.\footnote{\url{https://github.com/KellerJordan/modded-nanogpt/blob/bc3a0c2d640d0d73dedaef87eae26148d2e32afb/train_gpt.py}}
The model has 11 blocks, width 768, and 6 attention heads.
The original script uses FlashAttention-3, and we use FlashAttention-4~\citep{zadouri2026flashattention4}\footnote{\url{https://github.com/Dao-AILab/flash-attention/tree/1bda8f9290cd48d030f1516f0e680cd464ef3554}} to run it on the B200 GPU.
We train on GPT-2-tokenized FineWeb~\citep{penedo2024fineweb} for 1,250 scheduled steps followed by 40 extension steps.
NorMuon~\citep{li2026normuon} updates hidden matrix weights with base learning rate $0.023$,
and Adam updates the remaining parameters with base learning rate $0.008$.
We retain the script's momentum, parameter-specific learning rates, weight decay, and learning-rate schedule.
All methods use seeds 0--19.
Final validation loss is measured on the first 10,485,760 tokens of the
GPT-2-tokenized FineWeb validation shard.

\paragraph{Qwen3-0.6B.}
We use the Qwen3-0.6B~\citep{qwen2025qwen3} architecture with random initialization.
Training uses FineWeb-Edu~\citep{penedo2024fineweb} \texttt{sample-100BT},
using the \texttt{Qwen/Qwen3-0.6B} tokenizer, for 22,889 steps of 524,288 tokens.
Each step accumulates gradients over 8 microbatches of 32 sequences of length 2,048.
Attention uses the fused attention kernel of cuDNN~\citep{chetlur2014cudnn} (\texttt{nvidia-cudnn-frontend}~1.30.0)\footnote{\url{https://github.com/NVIDIA/cudnn-frontend/tree/42286a6fb8792055f3db80403350e80114ea086c}}, and the gated MLP uses the fused MLP kernel of QuACK.
Muon updates hidden matrix weights, and AdamW updates other parameters with
moment decay rates $0.9$ and $0.95$.
Both optimizers have weight decay $0.1$, and we clip the gradient norm at 1.
The learning rate increases linearly
to $0.0015$ during the first 1\% of training, then follows cosine decay to $0.00015$.
Muon uses momentum $0.95$ and scales each matrix update by
$0.2\sqrt m$, where $m$ is the larger dimension of the matrix.
Validation uses a fixed sequence of 8,388,608 held-out FineWeb-Edu tokens.

\paragraph{Gemma 3-1B.}
We use the Gemma 3-1B~\citep{gemma2025gemma3} architecture with random initialization.
Training uses FineWeb-Edu~\citep{penedo2024fineweb} \texttt{sample-100BT},
using the \texttt{google/gemma-3-1b-pt} tokenizer, for 41,962 steps of 524,288 tokens.
Each step accumulates gradients over 8 microbatches of 32 sequences of length 2,048.
Attention uses the fused attention kernel of cuDNN~\citep{chetlur2014cudnn} (\texttt{nvidia-cudnn-frontend}~1.30.0), and the gated MLP uses the fused MLP kernel of QuACK.
Muon updates hidden matrix weights, and AdamW updates other parameters with
moment decay rates $0.9$ and $0.95$.
Both optimizers have weight decay $0.1$, and we clip the gradient norm at 1.
The learning rate increases linearly
to $0.0005$ during the first 1\% of training, then follows cosine decay to $0.00005$.
Muon uses momentum $0.95$ and scales each matrix update by
$0.2\sqrt m$, where $m$ is the larger dimension of the matrix.
Validation uses a fixed sequence of 8,388,608 held-out FineWeb-Edu tokens.

\subsection{Runtime and Approximation Accuracy}
\label{app:definitions}

\paragraph{Test matrices.}
\label{app:inputs}
For timing measurements, we generate matrices by sampling independent $\mathcal N(0,1)$ entries in \texttt{float32}.
We consider single matrices and batches.
The single matrices have sizes $8192\times8192$, $16384\times16384$,
$4096\times14336$, $14336\times4096$, $8192\times28672$, $28672\times8192$,
$16384\times53248$, and $53248\times16384$.
The batched tensors have sizes $64\times4096\times4096$, $216\times2048\times7168$,
$216\times7168\times2048$, $128\times1024\times8192$, and $128\times8192\times1024$.
The first dimension of each batched tensor is its batch size.

To study how accurately each method approximates the polar factor, we construct matrices
with geometrically spaced singular values and random singular vectors.
These matrices have dimensions
$2048\times2048$, $1024\times3584$, and $3584\times1024$.
For dimensions $n\times m$ equal to $2048\times2048$ or $1024\times3584$, we generate independent $n\times n$ and $m\times n$ Gaussian matrices in \texttt{float64}, with independent $\mathcal N(0,1)$ entries.
A reduced QR decomposition factors each matrix into a matrix with $n$ orthonormal columns and an $n\times n$ upper triangular matrix.
Denote the orthonormal factors by $U\in\mathbb{R}^{n\times n}$ and $V\in\mathbb{R}^{m\times n}$.

We choose a target minimum singular value $\sigma_{\min}\in\{10^{-3},10^{-4}\}$.
For $\gamma\in(0,1)$, we set $\sigma_i$ proportional to $\gamma^{(i-1)/(n-1)}$, $i=1,\ldots,n$, scaled so that $\sum_i\sigma_i^2=1$.
These singular values are geometrically spaced and give a matrix of unit Frobenius norm.
We select $\gamma$ by bisection so that the smallest singular value $\sigma_n$ equals $\sigma_{\min}$.
We form $U\operatorname{diag}(\sigma_1,\ldots,\sigma_n)V^\mathsf{T}$ in \texttt{float64} and convert it to the stored input $X$ in \texttt{float16}.
For each of the dimensions $2048\times2048$ and $1024\times3584$, we generate
500 independent pairs of Gaussian matrices. Each pair determines $U,V$, which are
shared across both values of $\sigma_{\min}$. Every method receives the same stored inputs.
The $3584\times1024$ inputs are transposes of the stored $1024\times3584$ inputs.
In total, this gives 3,000 synthetic inputs, with 500 for each of the 3 dimensions and each of the 2 values of $\sigma_{\min}$.
The \texttt{Polar Switch} circuits are designed for singular values in $[0.001,1]$ (\Cref{app:coefficients}), so the spectra with $\sigma_{\min}=10^{-4}$ contain singular values below this interval.

\paragraph{Reference computation.}
After generating the test matrix $X$, we apply \texttt{Polar Switch} as in \Cref{alg:standard,alg:polar-switch-methods}, \textsf{Jordan}, \textsf{PE}, and \textsf{PE (BF16)} as in \Cref{alg:baselines}, and \textsf{GNS} with its official code.
For tall inputs, the code forms $X^\mathsf{T}X$ and applies polynomial multipliers
on the right. This is equivalent to processing the transpose and transposing the output back.
\Cref{fig:campaign-stability,fig:drift-mixing} show each synthetic spectrum and the Modded-NanoGPT \#91 matrices in separate panels, with equal weight per matrix.

\paragraph{Diagonal drift and directional mixing.}
\label{app:drift}
As in \Cref{sec:synthetic}, $X$ is the input, $X_T$ is the output, $n$ is the smaller dimension of $X$, and $U\Sigma V^\mathsf{T}$ is the compact SVD of $\texttt{float64}(X)$, where $\Sigma=\operatorname{diag}(\sigma_1,\ldots,\sigma_n)$.
Let $p$ be the scalar map of the method, including its normalization.
In exact arithmetic, the output is $Up(\Sigma)V^\mathsf{T}$: it has the same singular vectors as $X$, and each singular value $\sigma_j$ becomes $p(\sigma_j)$.
With $X_T$ converted to \texttt{float64}, let $H_T=U^\mathsf{T}X_TV\in\mathbb{R}^{n\times n}$ and $R_T=X_T-UH_TV^\mathsf{T}$, so that $H_T=p(\Sigma)$ and $R_T=0$ in exact arithmetic.
We define
\begin{align}
 \text{diagonal drift}&=\frac1{\sqrt n}\left(\sum_{i=1}^n\left((H_T)_{ii}-p(\sigma_i)\right)^2\right)^{1/2},
 \label{eq:experimental-diagonal-drift}\\
 \text{directional mixing}&=\frac1{\sqrt n}\left(\sum_{i\ne j}(H_T)_{ij}^2+\|R_T\|_F^2\right)^{1/2}.
 \label{eq:experimental-directional-mixing}
\end{align}
Diagonal drift measures how much rounding changes the singular values along the singular vectors of $X$, and directional mixing measures how much rounding moves the output into other directions (\Cref{fig:drift-mixing}).
Together, they give the total rounding error: by the definition of $R_T$, $X_T-Up(\Sigma)V^\mathsf{T}=U\left(H_T-p(\Sigma)\right)V^\mathsf{T}+R_T$, so
\begin{align*}
 \|X_T-Up(\Sigma)V^\mathsf{T}\|_F^2
 &=\left\|U\left(H_T-p(\Sigma)\right)V^\mathsf{T}\right\|_F^2+2\operatorname{tr}\left(\left(H_T-p(\Sigma)\right)^\mathsf{T}U^\mathsf{T}R_TV\right)+\|R_T\|_F^2\\
 &=\|H_T-p(\Sigma)\|_F^2+\|R_T\|_F^2
  =\sum_{i=1}^n\left((H_T)_{ii}-p(\sigma_i)\right)^2+\sum_{i\ne j}(H_T)_{ij}^2+\|R_T\|_F^2\\
 &=n\left((\text{diagonal drift})^2+(\text{directional mixing})^2\right).
\end{align*}
The first line expands the square and uses the cyclic property of the trace.
The second line uses $U^\mathsf{T}U=V^\mathsf{T}V=I$, $U^\mathsf{T}R_TV=H_T-H_T=0$, and that $p(\Sigma)$ is diagonal, so the off-diagonal entries of $H_T-p(\Sigma)$ are those of $H_T$.

\begin{figure}[htbp]
  \centering
  \includegraphics{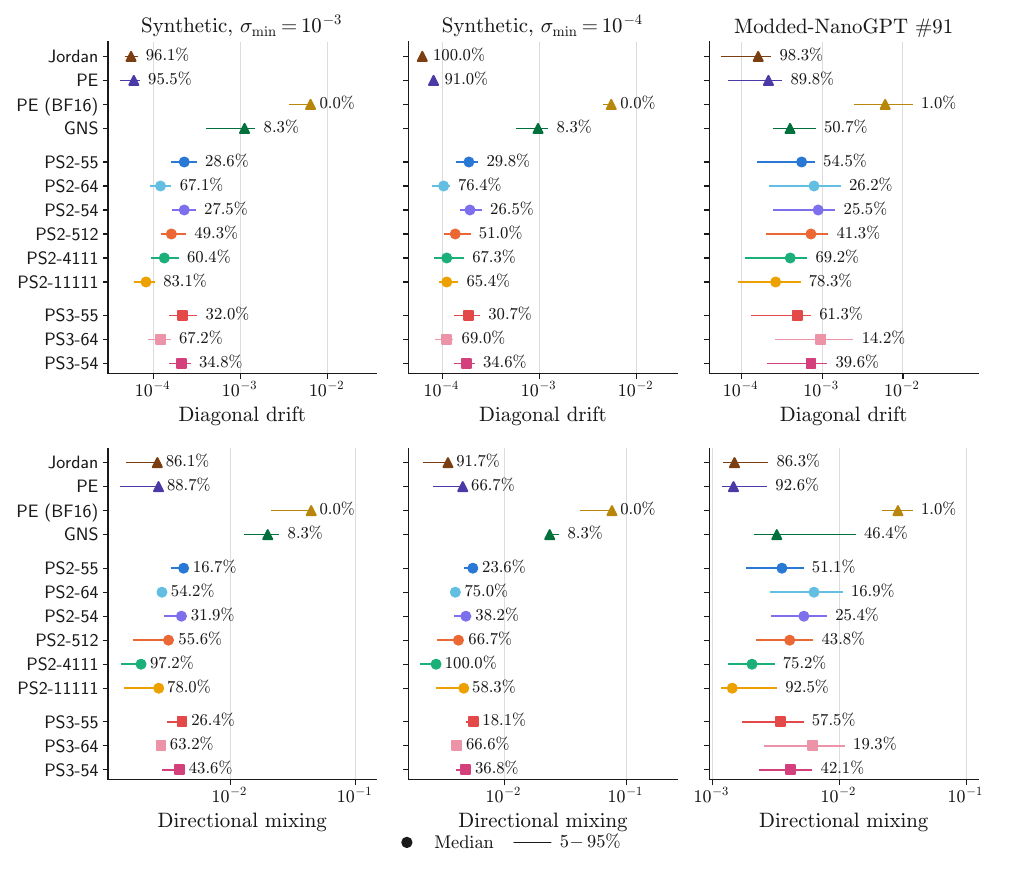}
  \caption{Diagonal drift (top) and directional mixing (bottom), defined in \eqref{eq:experimental-diagonal-drift} and \eqref{eq:experimental-directional-mixing}, with the panels and conventions of \Cref{fig:campaign-stability}.}
  \label{fig:drift-mixing}
\end{figure}

\paragraph{Runtime measurement.}
\label{app:timing}
For each tensor shape, we time all methods in five runs.
Each run starts a new Python process, uses the same input for every method, and compiles each method with \texttt{torch.compile} before timing.
The calls of different methods are interleaved: each round calls every method once.
After warm-up calls, each method has at least 30 timed calls per run for single matrices and at least 15 for batches.
We time each call with CUDA events, so a runtime covers one orthogonalization call on a single matrix or on a whole batch.
The runtime of a method is the median time of its calls in a run, and we report the median of these runtimes over the five runs.
For a runtime ratio, we divide the time of each call by the time of the reference call in the same round, where the reference method is \textsf{PE}, and also \textsf{GNS} for rectangular inputs, and we take the same two medians.
\Cref{fig:fp16-timing-ranges-combined} shows the ratios relative to \textsf{PE} for square (top), tall (middle), and wide (bottom) inputs, and its bars span the minimum and maximum of each ratio over the five runs.
\begin{figure}[htbp]
\centering
\includegraphics{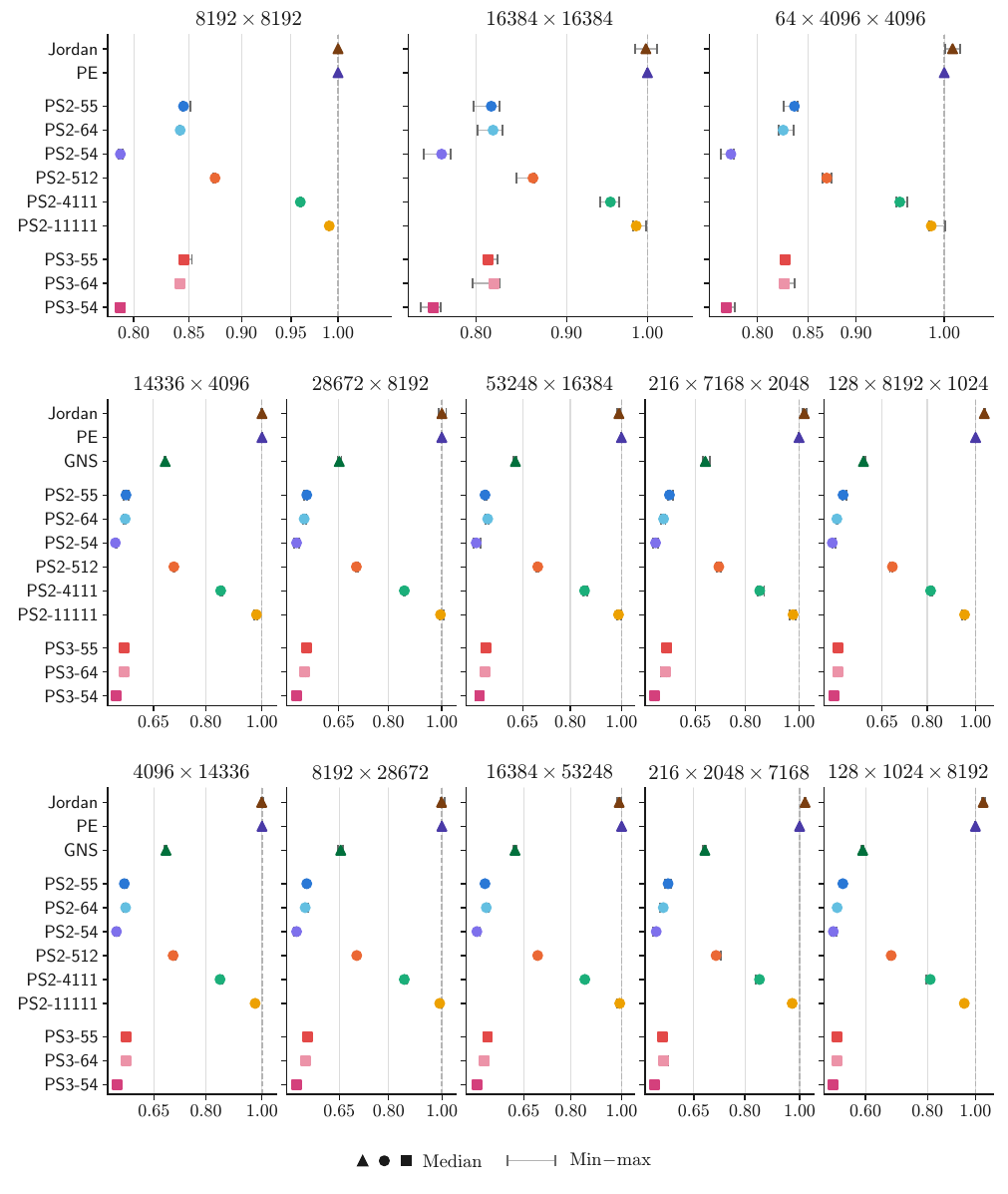}
\caption{Runtime ratios relative to \textsf{PE}.
Values below one indicate lower runtime.
\textsf{GNS} appears only for rectangular shapes, because it falls back to \textsf{PE} for square matrices.
Markers show medians across five runs, and bars span the minimum and maximum.}
\label{fig:fp16-timing-ranges-combined}
\end{figure}

\end{document}